\documentclass[preprint,11pt,authoryear]{elsarticle}


\usepackage[margin=2cm]{geometry}
\usepackage[hidelinks,colorlinks = true,linkcolor={blue!80!black},citecolor={blue!50!black},urlcolor={blue!80!black}]{hyperref}
\usepackage{amsmath,amsfonts}
\usepackage{amsthm}
\usepackage{enumitem}
\usepackage{amssymb}
\let\emptyset\varnothing
\usepackage{multirow}
\usepackage{float}
\usepackage{subcaption}
\usepackage{color}
\usepackage{adjustbox}
\usepackage{booktabs}
\usepackage{multirow}
\usepackage{soul}
\usepackage{algpseudocode}
\usepackage{algorithm}
\linespread{1.2}
\usepackage[usenames,dvipsnames]{xcolor}

\algnewcommand{\IfThenElse}[3]{
  \State \algorithmicif\ #1\ \algorithmicthen\ #2\ \algorithmicelse\ #3}

\usepackage[colorinlistoftodos]{todonotes}

\usepackage[colorinlistoftodos]{todonotes}
\definecolor{cerulean}{rgb}{0.0, 0.40, 0.60}

\captionsetup[subfigure]{labelfont=rm}




\def\J{\mathcal{J}}
\def\I{\mathcal{I}}

\newtheorem{theorem}{Theorem}
\newtheorem{defn}{Definition}

\newtheorem{lemma}[theorem]{Lemma}
\newtheorem{prop}[theorem]{Proposition}
\newtheorem{example}{Example}
\newtheorem{remark}{Remark}
\allowdisplaybreaks



\begin{document}
\newlength{\medidaparentesis}
\settowidth{\medidaparentesis}{(00)}

\begin{frontmatter}
\title{Stable formulations for the Capacitated Facility Location Problem with Customer Preferences}

\author[um]{Concepci\'on Dom\'inguez$^*$}
\ead{concepcion.dominguez@um.es}
\author[um]{Juan de Dios Jaime-Alcántara}
\ead{jd.jaimealcantara@um.es}
\address[um]{Department of Statistics and Operations Research, University of Murcia, Murcia 30100, Spain }
\cortext[cor1]{Corresponding author.}

\allowdisplaybreaks

\begin{abstract} 
In the Simple Plant Location Problem with Order (SPLPO), the aim is to open a subset of plants to assign every customer taking into account their preferences. Customers rank the plants in strict order and are assigned to their favorite open plant, and the objective is to minimize the location plus allocation costs. Here, we study a generalization of the SPLPO named the Capacitated Facility Location Problem with Customer Preferences (CFLCP) where a limited number of customers can be allocated to each facility. We consider the global preference maximization setting, where the customers preferences are globally maximized. For this setting, we define three new types of stable allocations, namely customer stable, pairwise stable and cyclic-coalition stable allocations, and we provide two mixed-integer linear formulations for each setting. In particular, our cyclic-coalition stable formulations are Pareto optimal in a global-preference maximization setting, in the sense that no customer can improve their allocation without making another one worse off. We provide extensive computational experiments and compare the quality of our allocations with previous ones defined in the literature. As an additional result, we present a novel formulation that provides Pareto optimal matchings in the Capacitated House Allocation problem of maximum cardinality.

\end{abstract}
	
\begin{keyword}
Location With Preferences; Plant Location Problem; Capacitated Facility Location; Cyclic-Coalition Stable Allocations; Stable Matchings; Combinatorial Optimization; Matchings Under Preferences; Pareto Optimal Matchings
\end{keyword}

\end{frontmatter}

\section{Introduction} \label{sec:intro}
The Simple Plant Location Problem with Order (SPLPO) is a generalization of the well-known Facility Location Problem (FLP) that takes into account customers' preferences over the plants in the allocation process. The aim of SPLPO is to open a number of facilities from a set of candidate ones and allocate every customer, minimizing the overall cost of location plus allocation. Customers have certain preferences over the candidate locations that may be related to their size, distance, and other features, and they express their preferences ranking the candidate locations from the best to the worst (ties are not allowed in this setting). Once the plants are installed, each customer is assigned to their favorite open plant. It is assumed that the locator plays no role in the allocation of customers but has full knowledge of their rankings when making the location decision.

SPLPO was introduced in \cite{hanjoul1987}, where the state-of-the-art formulation available to this day is provided and a heuristic is developed to tackle small instances. \cite{hansen2004} introduce a bilevel formulation that is then reformulated into a linear model in several ways, the best being the same formulation given by Hanjoul and Peeters. \cite{canovas2007} build upon the work from \cite{hanjoul1987}, developing some valid inequalities as well as a basic preprocessing analysis, and \cite{vasil2009} also derive new valid inequalities based on the subyacent set packing polytope. The efficiency of said family of valid inequalities is checked in \cite{vasilyev2010} by implementing a cutting plane algorithm to find an optimal solution. A simulated annealing method is used to obtain the upper bounds in these exact methods. \cite{vasilyev2013} carry out a polyhedral analysis of the problem, stating which inequalities are facet defining and developing a new set of facet-defining valid inequalities that are tested through extensive computational experiments. Very recently, \cite{cabezas2023} study some advantages of using a Lagrangian and semi-Lagrangian approach for the SPLPO, and they use the solution of a Lagrangian relaxation algorithm as the starting point of a semi-Lagrangian relaxation method. Then they apply a variable fixing heuristic to find good feasible solutions (often the optimal solution) in a so-called semi-Lagrangian relaxation heuristic algorithm. Note that, if the ranking is given by the distance to the facilities, then the preference constraints included in \cite{hanjoul1987} and subsequent works match the well-known Closest Assignment Constraints (CAC). See \cite{espejo2012} for a detailed comparison of CAC in integer programming.

In this work, we study an extension of SPLPO where each facility has a capacity constraint, that is, a limited number of customers that can be allocated to it. The problem is known as Capacitated Facility Location with Customer's Preferences (CFLCP). This is a much more general setting with many more realistic applications, such as assigning children to schools, students to universities, and patients to hospitals, among others. Its counterpart without customer preferences, the Capacitated Facility Location (CFL) problem, has been thoroughly studied  \citep[without being exhaustive, see e.g.][]{avella2009, gortz2012, fernandez2015, fischetti2016}. However, the combination of a cardinality constraint and customers with preferences makes the allocation rule unclear. In fact, knowing the preferences of customers and taking them into account when selecting the facilities to be opened is not enough to properly define the assignments in the presence of capacity constraints. Thus, additional assumptions regarding the customers' preferences and willingness to exchange locations are necessary and result in various allocation rules that give rise to different problems and formulations. This might be the reason why there are only a handful of references in the location literature that combine these two ingredients. 

There are two main approaches to customer allocation. In the first, customers are assigned to their favorite open plant, as in  SPLPO. This approach corresponds to an \textit{individual preference maximization} setting. Naturally, CAC are to be used to model the problem and facilities that are overloaded cannot ever be opened (note that the overloading of a facility depends on the rest of the open facilities). To the best of our knowledge, there are only two works focusing on this approach. \cite{busing2022} analyze the CFLCP if it is defined on a graph, where the assignment costs are equal to the distances between the customer nodes and the facility nodes. They derive some settings that are NP-complete and some that are
polynomially solvable, closing the research gap regarding the computational complexity of FLPs with CAC mixed with capacity and revenue constraints. \cite{busing2025} build upon the previous work and contribute two novel preprocessing
methods, which reduce the size of the considered integer programming formulation. Furthermore, they introduce sets of valid inequalities that decrease the integrality gaps.

The second approach is the one followed in this article and consists in a \textit{global preference maximization} setting. In this case, any facility can be opened, and if a facility is overloaded customers can be served at their next favorite one. The few works that follow this approach all introduce bilevel formulations with a second-level global-preference maximization problem. The authors of \cite{calvete2020} introduce a bilevel formulation that is then reformulated into a single-level mixed-integer problem using duality theory. They also develop a matheuristic based on an evolutionary algorithm and compare the performance of both approaches. We build upon the work developed in this article, as it serves as motivation for the formulations hereby introduced. In \cite{casas2018}, a very similar bilevel model is proposed for a slightly different setting where each customer has a demand and the capacity of a facility refers to the amount of demand it can deal with. This fact increases the difficulty of the resultant second-level formulation, which is a generalized assignment problem and hence NP-hard. To overcome this issue, the authors define attainable bilevel solutions based on an efficient approximation of the inducible region and develop heuristic algorithms to tackle the problem. Finally, \cite{polino2024} develop a bilevel problem for extracurricular workshop planning that is a generalization of the previous where different types of facilities (in this case, workshops with different capacities) can be installed in the same location. They propose a single-level reformulation and two metaheuristics (a path-relinking metaheuristic and an iterated local search procedure).


\subsection{Our contribution}
We analyze how the inclusion of capacity constraints poses a challenge in the characterization of solutions. As stated, the maximization of individual customers' preferences may clash with the capacity restrictions and lead to unrealistic infeasible solutions depending on the application. Moreover, we raise awareness that a global preference maximization that relies on numeric preference values as introduced in the previous bilevel models is also disadvantageous, since it can lead to biased solutions where one customer is favored or disfavored over the rest.  So, the question arises: If numeric values must not characterize preferences in a global preference maximization objective function, how do we find a fair allocation that maximizes their preferences?
    
We give an answer to this question by replacing the second-level objective (maximizing customers' preferences) with allocations that meet different stability criteria. To this end, we introduce customer stable, pairwise stable, and cyclic-coalition stable allocations. These stable allocations meet fairness criteria in the sense that they provide unbiased solutions towards the customers. They are not based on numeric preference values, but rather on the ranking (strict ordering) of the plants given by the customers. As an additional advantage, characterizing these stable allocations can be made through constraints, so the models obtained are all mixed-integer linear or integer linear single-level models.

In a customer stable allocation, no customer can improve their position by switching to an undersubscribed plant. Therefore, customer stable solutions arise naturally in contexts where, once allocated, customers are unable to switch allocations unless their desired plant is undersubscribed. Pairwise stable settings are an extension of the previous that fit applications where customers are also allowed to swap locations, and they are willing to do so if both of them improve. Hence, pairwise stable allocations forbid assignments with pairs of clients in this situation. Lastly, cyclic-coalition stable solutions are an extension of pairwise stable solutions to any subset of customers. They are undominated in the sense that no customer can improve their allocation without making another one worse off. They constitute a non-biased global-preference-maximization setting.

These allocations are inspired by the literature on matching under preferences \citep[see][]{manlove2013} and extended here to account for the location of the facilities as well. The allocation problem in our setting (capacities and one-sided preferences) corresponds to the Capacitated House Allocation problem (CHA), where a series of \emph{applicants} are to be allocated to a series of \emph{houses} of a certain capacity, and the applicants have a preference list ranking a subset of the houses in strict order. Although the customer stable and pairwise stable are not defined for matchings, our cyclic-coalition stable allocation corresponds to a Pareto optimal matching in the CHA \citep[][Chapter 6]{manlove2013}. Note that there are no optimization models given in the matching literature or in the location literature to find any of these matchings. The fastest algorithms to find Pareto optimal matchings of maximum cardinality are given in \cite{sng2008} along with complexity results.

For each of the three settings, we provide an initial integer formulation and a reduced mixed-integer formulation with fewer binary variables that has some relevant properties. As an additional result, we derive a model to solve the maximum Pareto optimal CHA. We compare each initial model with its reduced counterpart by conducting extensive computational experiments with randomly created instances. Finally, we compare our stable solutions with the solutions provided in \cite{calvete2020} and show reductions of up to 17\% in the objective value. 

The paper is organized as follows. The notation and motivation for the problem are given in Section \ref{sec:motivation}. Three types of stable allocations are defined in Section \ref{sec:stableAllocations}. In Sections \ref{sec:customerstablemodels}-\ref{sec:cycliccoalitionstablemodels} we introduce initial and reduced customer, pairwise and cyclic-coalition stable formulations, respectively, and a maximum Pareto optimal formulation for CHA. Computational experiments are carried out in Section \ref{sec:compExp}, and some conclusions and future research are provided in Section \ref{sec:conclusion}.

\section{Notation and previous bilevel models for CFLCP} \label{sec:motivation}

In this section, we review two formulations proposed in the literature to tackle CFLCP and provide examples to illustrate the type of solutions they provide. Recall that the aim is to open a subset of facilities and to allocate all the customers satisfying the capacity constraints. Along with these requirements, we have two competing objectives which are the maximization of customers' preferences and the minimization of the location and allocation costs.

First, let us introduce the notation that will be used throughout the work. Let $\J$ be the set of potential plants or facilities to be opened, and $\I$ be the set of customers to be allocated. Each plant $j\in \J$ has an associated cost $f_j$ which refers to its opening, and for each pair $i,j$ of customer-facility there is an associated cost $g_{ij}$ of allocating customer $i$ to facility $j$. The capacity of each facility $j\in \J$ is $c_j$. W.l.o.g., it is assumed that $c_j < |I|$ for at least one facility $j$ (or else the problem becomes the (uncapacitated) SPLPO). Each customer $i\in \I$ has a ranking of the facilities from the best to the worst (ties are not allowed). The notation $j\prec_i j'$ states that $j$ is higher in the ranking than $j'$, that is, that $i$ \textit{prefers} $j$ to $j'$.

As is customary in the literature, we define binary variables $y_j$, $j\in \J$, equal to 1 if and only if the corresponding facility is open. Likewise, we define binary variables $x_{ij}$ for each customer $i\in \I$ and plant $j\in \J$, such that $x_{ij}=1$ if and only if customer $i$ is assigned to plant $j$. 

The model given in \citet{calvete2020} tackles the resolution of the CFLCP. To do so, the authors assume that each customer $i\in \I$ has a ranking of the facilities characterized by means of a set of predefined nonnegative preferences $p_{ij}\in \{1,\dots,|J|\}$, $j \in \J$, with $p_{ij} \neq p_{ij'}$ for $j\neq j'$. The lower the value, the greater the preference: $p_{ij} < p_{ij'} \Leftrightarrow j\prec_i j'$. Their bilevel model reads:
\begin{subequations} \label{BMCalvete}
\begin{align} 
\min_{\boldsymbol{y}} \quad & \sum_{j\in \J} f_jy_j + \sum_{i\in \I} \sum_{j\in \J} g_{ij}x_{ij}  \label{BMCalvete_OF1} \\
\text{s.t.} \quad &\sum_{j\in \J} c_jy_j \geq |I|, \label{BMCalvete_FeasibilityCut}\\
& y_j \in \{0,1\}, \quad \forall j\in\J, \label{BMCalvete_Ybinarias} \\
& x_{ij} \in \quad\arg\min_{\boldsymbol{x}}\quad  \sum_{i\in \I} \sum_{j\in \J} p_{ij}x_{ij}  \label{BMCalvete_OF2}  \\
    &\hspace{23mm}  \text{s.t.}\quad   \sum_{j\in \J} x_{ij} = 1, \quad \forall i\in \I, \label{BMCalvete_allocation} \\
&\hspace{23mm} \phantom{s.t.} \quad \sum_{i\in \I} x_{ij}  \le c_jy_j, \quad \forall j\in\J, \label{BMCalvete_capacity}\\
&\hspace{23mm}\phantom{s.t.} \quad x_{ij} \in \{0,1\}, \quad \forall i \in \I, j \in \J.  \label{BMCalvete_Xbinarias}
\end{align}
\end{subequations}

Model \eqref{BMCalvete} has a bilevel structure where the first level \eqref{BMCalvete_OF1}-\eqref{BMCalvete_Ybinarias} is the \emph{location} (i.e., the company's decision of the facilities to open) and the second level \eqref{BMCalvete_OF2}-\eqref{BMCalvete_Xbinarias} is the \emph{allocation} (that is, the assignment of the customers to the facilities). The objective function of the first level \eqref{BMCalvete_OF1} is to minimize the overall costs of opening plants and allocating customers. Constraints \eqref{BMCalvete_FeasibilityCut} state that there must be enough open plants to allocate every customer. This constraint is redundant, but it is added because it acts as a global feasibility cut guaranteeing that any feasible location $\boldsymbol{y}$ of the first-level problem admits a feasible allocation $\boldsymbol{x}$. The objective function of the second level \eqref{BMCalvete_OF2} is to maximize the preferences of customers globally. Constraints \eqref{BMCalvete_allocation} ensure that every customer is allocated to one plant, whereas the \textit{capacity constraints} \eqref{BMCalvete_capacity} guarantee that the capacity of each plant is not exceeded. Finally, constraints \eqref{BMCalvete_Ybinarias} and \eqref{BMCalvete_Xbinarias} express the binary character of the variables in the model. When constraints \eqref{BMCalvete_capacity} are included, the set of constraints ensuring that customers are allocated only to open plants,
\begin{equation} \label{BMCalvete_VI}
    \quad x_{ij} \le y_j, \quad \forall i \in \I, j \in \J,
\end{equation}
\noindent is redundant. However, they are known to act as valid inequalities, strengthening the linear relaxation bounds of the second-level problem. Model \eqref{BMCalvete} is reformulated in \cite{calvete2020}, obtaining a single-level mixed-integer linear model that can be solved using off-the-shelf solvers.

Note that the second-level problem \eqref{BMCalvete_OF2}-\eqref{BMCalvete_Xbinarias} is separable by customer in the uncapacitated SPLPO, since there are no linking constraints \eqref{BMCalvete_capacity} (but \eqref{BMCalvete_VI} should be included). On the other hand, if the capacity constraints \eqref{BMCalvete_capacity} are included in the first level, then again the second level problem is separable by customer, and thus each customer must be allocated to their most preferred open facility. As stated in \cite{calvete2020}, the main drawback of this approach is that, since customers choose \textit{individually}, the allocations given by the solutions of the second-level problem are very frequently infeasible due to the capacity restrictions. Hence, the proposal of the bilevel model \eqref{BMCalvete}, in which the preferences of the customers are \textit{globally} maximized and for any location satisfying \eqref{BMCalvete_FeasibilityCut} there exists a feasible allocation.

In the following example, we use several toy preference matrices to illustrate how the values of the parameters $p_{ij}$ can affect the allocations given by the second-level problem in model \eqref{BMCalvete} and provide biased solutions.

\begin{table}
    \begin{subtable}[h]{0.45\textwidth}
    \centering
    \begin{tabular}{ccccc}
    \toprule    
     & Cust.\ 1 & Cust.\ 2 & Cust.\ 3 & Cust.\ 4  \\ \cmidrule(lr){2-5}
    Plant 1 & 1 &  1 & 1 & 1  \\ 
    Plant 2 & 2 &  3 & 4 & 10 \\ 
    Plant 3 & 3 &  9 & 16 & 100 \\
    Plant 4 & 4 &  27 & 64 & 1000 \\
    \bottomrule
\end{tabular}
       \caption{$p_{ij}\notin \{1,\dots,4\}$, $\forall i \in \I$, $j \in \J$}
       \label{tab:toyexample1a}
    \end{subtable}
\hfill
    \begin{subtable}[h]{0.45\textwidth}
    \centering
    \begin{tabular}{ccccc}
    \toprule    
     & Cust.\ 1 & Cust.\ 2 & Cust.\ 3 & Cust.\ 4  \\ \cmidrule(lr){2-5}
    Plant 1 & 1 &  1 & 2 & 3  \\ 
    Plant 2 & 2 &  2 & 4 & 2 \\ 
    Plant 3 & 3 &  4 & 3 & 1 \\
    Plant 4 & 4 &  3 & 1 & 4 \\
    \bottomrule
\end{tabular}
       \caption{$p_{ij}\in \{1,\dots,4\}$, $\forall i \in \I$, $j \in \J$}
       \label{tab:toyexample1b}
    \end{subtable}
\caption{Preference matrices for Example \ref{example1}. All the plants have capacity equal to 2.} 
\label{tab:toyexample1}
\end{table}

\begin{example} \label{example1}
Consider a toy instance with $|\I|=|\J|=4$ and capacities $c_j=2$ $\forall j\in \J$. Assuming that the costs of opening a plant greatly surpass those of allocating customers, i.e., $f_j \gg g_{ij}$, exactly two plants are open in an optimal solution of the bilevel problem. We are interested in the optimal allocations provided by the second-level problem \eqref{BMCalvete_OF2}-\eqref{BMCalvete_Xbinarias} for different solutions of the first-level problem.

First, consider the preference matrix $(p_{ij})$ given in Table \ref{tab:toyexample1a}. In this case, the ranking of the customers is clearly the same for all of them: $1 \prec_i 2 \prec_i 3 \prec_i 4$ $\forall i\in \I$. In theory, then, any feasible solution consists in allocating two customers to their favorite open plant, and the other two customers to their second favorite plant, so \textit{any} feasible allocation should be optimal for the second-level problem. However, due to the values $p_{ij}$ assigned to the preferences of each customer, only one allocation is optimal if two plants are open: customers 3 and 4 are allocated to their most preferred plant (between the two open ones), whereas customers 1 and 2 are allocated to their second favorite. This biased allocation where the preferences of customers 3 and 4 are \textit{always} fulfilled before the preferences of the other two customers regardless of the location of the plants occurs because the difference between the values $p_{ij}$ is greater for customers 3 and 4 than for customers 1 and 2.

In the previous setting it can be argued that the feasible allocations are influenced by the fact that the values $p_{ij}$ are very different among customers and they are not in the set $\{1,\dots,|\J|\}$. To illustrate how the allocations can be biased even if $p_{ij} \in \{1,2,\dots,|\J|\}$, consider the same setting with the preference matrix given in Table \ref{tab:toyexample1b}. 

Suppose plants 1 and 2 are open. Then customers 1, 2 and 3 would rather be allocated to plant 1 (and customer 4 to plant 2) but plant 1 has capacity equal to 2, so the preferences of one customer will not be maximized. Thus, in principle any of the three feasible allocations with customer 4 allocated to plant 2 is maximizing the preferences of three customers and should be optimal. In fact, two of these allocations are optimal: allocating customer 3 to plant 1 and customer 4 to plant 2 (and customers 1 and 2 each to a plant indistinctly) provides solutions with objective value 7.   However, if customers 1 and 2 are allocated to plant 1 and customers 3 and 4 to plant 2, the solution is not optimal for the second-level problem (and thus it is not feasible), since is has objective value equal to 8. In this case, customer 3 is favored over customers 1 and 2.

Consider now that plants 1 and 3 are open. Then a similar situation arises where customers 1, 2 and 3 prefer plant 1 over plant 3, but the \emph{only} optimal solution of the second-level problem consists in allocating customers 1 and 2 to plant 1 and customers 3 and 4 to plant 3. In this case, the preferences of customer 3 are dismissed in favor of those of customers 1 and 2.
\end{example}

As can be seen, although the second-level problem provides an optimal solution where customers maximize their preferences, it does not provide \emph{all} the solutions where this happens. The numbers $p_{ij}$ have no particular meaning: they are only included to express the ordering in the ranking of each customer. Yet, they influence the number and type of optimal solutions of the second level, limiting the number of feasible allocations for given first-level solutions in an undesirable manner. 
Finally, note that there is no bias when maximizing preferences expressed using numeric values in the uncapacitated SPLPO, because the preferences of the customers are satisfied individually, so the preferences of a customer are not compared to those of others.

\section{Three new types of stable allocations for the CFLCP} \label{sec:stableAllocations}
As we have hopefully conveyed through Example \ref{example1}, models that maximize numeric preferences are biased with respect to the preferences of certain customers, regardless of the actual numbers that we choose for parameters $p_{ij}$. Therefore, to formulate the problem we avoid the modeling of preferences using numeric values. Instead, we consider only the individual ranking of each customer, and we characterize solutions by the type of allocations they provide. To do so, we define three types of allocations: customer stable, pairwise stable and cyclic-coalition stable allocations. The concept of stability has been previously used in the context of matchings under preferences \citep[see][]{manlove2013}, and we redefine it here for our setting. We first introduce blocking customers, blocking pairs and blocking sets in an allocation. Recall that throughout the paper we assume that customers have strict preferences over plants.

\begin{defn} \label{def:blockingcustomer}
Given a feasible allocation $A$ in an instance of CFLCP, a customer $i$, allocated to plant $j$, is a \emph{blocking customer} if the following conditions are satisfied:
\begin{enumerate}
\item $i$ prefers plant $j'$ to his assigned plant $j$ in $A$, 

\item $j'$ is open, and

\item there are less than $c_{j'}$ customers allocated to $j'$ in $A$.
\end{enumerate}

Furthermore, $A$ is \emph{customer stable} if it has no blocking customers.
\end{defn}
In an instance of CHA, our customer stable allocation corresponds to a maximal trade-in-free matching.

\begin{defn}
Let $A$ be an allocation in an instance of CFLCP. A pair of customers $(i,i')$ allocated respectively to plants $j,j'$ ($j \neq j'$) is a \emph{blocking pair} for $A$ if $i$ prefers $j'$ over his current allocation $j$ and $i'$ prefers $j$ over $j'$. 

Furthermore, $A$ is said to be \emph{pairwise stable} if it does not admit blocking customers or blocking pairs.
\end{defn}
\begin{defn}
A set of customers $\bar{I}=\{i_1, \dots, i_n\}$, $|\bar{I}| \ge 3$, allocated to plants $\bar{J}=\{j_1, \dots, j_n\}$ respectively in an allocation $A$, is a \emph{blocking set} if there exists a bijection $\gamma: \bar{I} \rightarrow \bar{J}$ such that every customer $i_t$ prefers $\gamma(i_t)$ to its current allocation $j_t$.

Moreover, $A$ is \emph{cyclic-coalition stable} if it has no blocking customers, blocking pairs, or blocking sets.
\end{defn}

In an instance of CHA, our cyclic-coalition stable allocation corresponds to a maximal, trade-in-free and cyclic coalition-free matching and our pairwise stable allocation is not defined.

By extension, a formulation for the CFLCP is customer stable (resp.\ pairwise stable, cyclic-coalition stable) if any feasible allocation is customer stable (resp.\ pairwise stable, cyclic-coalition stable).

These three types of allocations fit different applications, depending on the circumstances under which we assume that customers can change their allocation. Intuitively, when we seek for a customer stable allocation we are assuming that, once allocated, customers can only change their plant for another one that they prefer over their current one if it is undersubscribed. As for pairwise stable and cyclic-coalition stable allocations, they fit applications where customers are also able to swap allocations (in the case of pairwise stable) or \emph{rotate} (for three or more customers in the cyclic-coalition stable allocations) if, in doing so, all of them improve in terms of their preferences (hence the name Pareto optimal in the related matching literature). Note that the bilevel model provided in \citep{calvete2020} provides only cyclic-coalition stable allocations, but not all of them (many of them are rendered infeasible by the bilevel model).

\begin{remark}
    Model \eqref{BMCalvete} provides cyclic-coalition stable allocations. In fact, consider an allocation $A$ that is not cyclic-coalition stable. If there is a blocking customer $i$, then $i$ is allocated to $j$ but prefers $j'$, and $j'$ is undersubscribed. Hence, $p_{ij'}<p_{ij}$ and reassigning $i$ to $j$ produces an allocation with strictly lower objective value \eqref{BMCalvete_OF2}. Alternatively, there exists a blocking pair or a blocking set of customers $\bar{I}=\{i_1, \dots, i_n\}$, $|\bar{I}| \ge 2$, allocated to different plants $\bar{J}=\{j_1, \dots, j_n\}$, and a bijection $\gamma$ such that $i_t$ prefers $\gamma(i_t)$ to its current allocation $j_t$. But this implies $p_{i_t\gamma(i_t)} < p_{i_tj_t}$ for all $t\in \{1,\dots,n\}$, so there exists a feasible allocation with strictly lower objective value \eqref{BMCalvete_OF2}. Hence, in any case the initial allocation $A$ is infeasible for \eqref{BMCalvete}.
\end{remark}
 \begin{remark}
    Not every cyclic-coalition stable allocation is feasible for model \eqref{BMCalvete}. To illustrate this, consider Example \ref{example1} for the preference matrix given in Table \ref{tab:toyexample1b}. Consider the allocation given by customers 1 and 2 allocated to plant 1, and customers 3 and 4 allocated to plant 2 (only plants 1 and 2 are open). This allocation is infeasible for model \eqref{BMCalvete}, but is cyclic-coalition stable.
\end{remark}
In the following sections, we introduce customer stable, pairwise stable and cyclic-coalition stable formulations for the CFLCP. 

\section{Customer stable models for the CFLCP} \label{sec:customerstablemodels}
In this section, we begin by introducing a formulation that minimizes the overall cost of locating plants and allocating customers, and that provides customer stable allocations. To do so, consider the notation and variables introduced in Section \ref{sec:motivation}. To replace the numeric preferences, for each customer $i\in \I$ we use the previously defined total ordering $\prec_i$ such that $j\prec_i j'$ (resp.\ $j \succ_i j'$) if customer $i$ prefers plant $j$ over plant $j'$ (resp.\ plant $j'$ over plant $j$). Furthermore, $j\preceq_i j'$ if customer $i$ prefers plant $j$ over plant $j'$ or $j=j'$. With this notation, we introduce an integer linear formulation:
\begin{subequations} \label{CFLCP_CS}
\begin{align}
\texttt{(CFLCP)}_{\texttt{CS}} \quad \min_{\boldsymbol{x},\boldsymbol{y}} \quad & \sum_{j\in \J} f_jy_j + \sum_{i\in \I} \sum_{j\in \J} g_{ij}x_{ij}  \label{CFLCP_CS_FO} \\
\text{s.t.} \quad &\sum_{j\in \J} x_{ij} = 1, \quad \forall i\in \I, \label{CFLCP_CS_allocation}\\
& \sum_{i\in \I} x_{ij}  \le c_jy_j, \quad \forall j\in\J, \label{CFLCP_CS_capacity}\\
&x_{ij} \le y_j, \quad \forall i \in \I, j \in \J, \label{CFLCP_CS_allocationOpenPlants} \\
&  c_jy_j \leq c_j\sum_{j'\in \J:\atop j'\preceq_i j} x_{ij'} + \sum_{i'\in \I\setminus \{i\}} x_{i'j}, \quad \forall i\in \I, j\in \J, \label{CFLCP_CS_preferences} \\
& x_{ij} \in \{0,1\}, \quad \forall i \in \I, j \in \J,  \label{CFLCP_CS_xbinarias} \\
& y_j \in \{0,1\}, \quad \forall j \in \J.  \label{CFLCP_CS_ybinarias}
\end{align}
\end{subequations}

The objective \eqref{CFLCP_CS_FO} is to minimize the location and allocation costs. Constraints \eqref{CFLCP_CS_allocation} and \eqref{CFLCP_CS_capacity} ensure that all customers are allocated and the capacity of the plants is fulfilled. Constraints \eqref{CFLCP_CS_allocationOpenPlants} are redundant but are known to strengthen the linear relaxation. Constraints \eqref{CFLCP_CS_preferences} guarantee that the allocation is customer stable: when a plant $j$ is open ($y_j=1$), either customer $i$ is allocated to a plant $j'$ that he likes the same of more than $j$ (so $\sum_{j'\in \J: j'\preceq_i j} c_jx_{ij'} = c_{j}$), or there must be $c_j$ customers allocated to $j$ (i.e.\ plant $j$ is full). Constraints \eqref{CFLCP_CS_xbinarias}-\eqref{CFLCP_CS_ybinarias} ensure the binary nature of the variables.

\begin{prop}
Every feasible allocation provided by model ${\normalfont\texttt{(CFLCP)}_{\texttt{CS}}}$ is customer stable.
\end{prop}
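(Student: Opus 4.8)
The plan is to argue by contradiction, using only constraint \eqref{CFLCP_CS_preferences} together with the assignment constraints \eqref{CFLCP_CS_allocation}. Let $(\boldsymbol{x},\boldsymbol{y})$ be any feasible solution of $\texttt{(CFLCP)}_{\texttt{CS}}$, and let $A$ be the induced allocation (customer $i$ is allocated to plant $j$ iff $x_{ij}=1$). Suppose, towards a contradiction, that $A$ admits a blocking customer $i$, allocated to some plant $j$, with blocking plant $j'$. By Definition \ref{def:blockingcustomer} this means: (i) $j'\prec_i j$, in particular $j'\neq j$; (ii) $j'$ is open, i.e.\ $y_{j'}=1$; and (iii) $\sum_{i'\in \I} x_{i'j'} < c_{j'}$.

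Next I would instantiate constraint \eqref{CFLCP_CS_preferences} at the pair $(i,j')$, obtaining
\[
c_{j'}y_{j'} \;\le\; c_{j'}\!\!\sum_{\substack{j''\in \J:\\ j''\preceq_i j'}}\!\! x_{ij''} \;+\; \sum_{i'\in \I\setminus\{i\}} x_{i'j'}.
\]
By (ii) the left-hand side equals $c_{j'}$. The key step is to show that the first term on the right-hand side vanishes: by \eqref{CFLCP_CS_allocation} customer $i$ is assigned to exactly one plant, namely $j$, so $x_{ij''}=0$ for every $j''\neq j$; and since $j'\prec_i j$ and $\prec_i$ is a strict total order, $j\notin\{j''\in\J:\ j''\preceq_i j'\}$, so that $\sum_{j''\preceq_i j'} x_{ij''}=0$. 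Moreover $x_{ij'}=0$ because $j'\neq j$, hence $\sum_{i'\in \I\setminus\{i\}} x_{i'j'}=\sum_{i'\in \I} x_{i'j'}$. Substituting back, \eqref{CFLCP_CS_preferences} yields $c_{j'}\le \sum_{i'\in \I} x_{i'j'}$, which contradicts (iii). Therefore $A$ has no blocking customer and is customer stable.

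The only point requiring care is the index-set bookkeeping for $\{j''\in\J:\ j''\preceq_i j'\}$: one must observe that the plant $j$ to which $i$ is actually allocated is \emph{excluded} from this set, which is exactly what the blocking condition $j'\prec_i j$ together with strictness of the ranking guarantees. Once that is settled, the "preference term" in \eqref{CFLCP_CS_preferences} collapses to zero and the constraint forces $j'$ to be at full capacity, contradicting that $j'$ is undersubscribed. Everything else is immediate from feasibility, so I do not anticipate any further obstacle.
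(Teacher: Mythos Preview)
Your proof is correct and follows essentially the same approach as the paper: assume a blocking customer $i$ with blocking plant $j'$, instantiate constraint \eqref{CFLCP_CS_preferences} at $(i,j')$, observe that the left-hand side equals $c_{j'}$ while the preference sum vanishes and the occupancy sum is strictly below $c_{j'}$, yielding a contradiction. Your bookkeeping is in fact slightly more explicit than the paper's, but the argument is identical in substance.
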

\begin{proof}
Assume that allocation A is not customer stable, and let $i$ be a blocking customer, and $j, j'\in \J$ under the conditions of Definition \ref{def:blockingcustomer}: $x_{ij}=1$, $j' \prec_i j$ and $\sum_{i' \in \I} x_{i'j'} < c_{j'}$. Then for the pair $i\in\I$, $j'\in \J$, constraint \eqref{CFLCP_CS_preferences} is not satisfied: the left-hand side of \eqref{CFLCP_CS_preferences} is equal to $c_{j'}$ due to 2., whereas $\sum_{j''\in \J: j''\preceq_i j'} c_{j'}x_{ij''} = 0$ due to 1.\ and $\sum_{i'\in \I\setminus \{i\}} x_{i'j'} < c_{j'}$ due to 3. Hence, if an allocation is feasible for \eqref{CFLCP_CS}, it is customer stable.
\end{proof}

In the uncapacitated version of this problem, it is well-known that the integrality constraints on the $x$-variables can be relaxed. However, as we see in the following, this is not the case for the previous model.

\begin{table}
\begin{center}
    \begin{subtable}[h]{\textwidth}
    \begin{center}
    \begin{tabular}{ccccccccc}
    \toprule    
     & Cust.\ 1 & Cust.\ 2 & Cust.\ 3 & Cust.\ 4  & Cust.\ 5 & Cust.\ 6 & Cust.\ 7 & Cust.\ 8  \\ \cmidrule(lr){2-9}
    Plant 1 & 3 &  3 & 3 & 1 & 3 &  1 & 3 & 1 \\ 
    Plant 2 & 3 &  3 & 3 & 2 & 2 &  3 & 1 & 2\\ 
    Plant 3 & 3 &  3 & 1 & 1 & 1 &  1 & 2 & 3\\
    Plant 4 & 3 &  3 & 1 & 3 & 3 &  2 & 1 & 1\\
    \bottomrule
\end{tabular}
       \caption{Cost matrix $(g_{ij})$, $\forall i \in \I$, $j \in \J$.}
       \label{tab:counterexamplecosts}
       \end{center}
    \end{subtable}

    \begin{subtable}[h]{\textwidth}
    \begin{center}
    \begin{tabular}{ccccccccc}
    \toprule    
     & Cust.\ 1 & Cust.\ 2 & Cust.\ 3 & Cust.\ 4   & Cust.\ 5 & Cust.\ 6 & Cust.\ 7 & Cust.\ 8  \\ \cmidrule(lr){2-9}
    Plant 1 & 3 &  3 & 2 & 1  & 4 &  3 & 2 & 4  \\ 
    Plant 2 & 4 &  2 & 3 & 2  & 1 &  1 & 3 & 1 \\ 
    Plant 3 & 2 &  4 & 4 & 3  & 3 &  2 & 1 & 3 \\
    Plant 4 & 1 &  1 & 1 & 4  & 2 &  4 & 4 & 2 \\
    \bottomrule
\end{tabular}
       \caption{Preference matrix $(p_{ij})$, $\forall i \in \I$, $j \in \J$. }
       \label{tab:counterexampleprefs}
       \end{center}
    \end{subtable}
\caption{Small instance of the CFLCP. $c_j=3$ and $f_j=2$ $\forall j \in \J$.} 
\label{tab:counterexample}
\end{center}
\end{table}

\begin{prop}
    The integrality constraints on the $x$-variables \eqref{CFLCP_CS_xbinarias} cannot be relaxed in model ${\normalfont\texttt{(CFLCP)}_{\texttt{CS}}}$.
\end{prop}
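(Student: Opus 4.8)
The plan is to disprove relaxability by exhibiting a concrete counterexample, namely the CFLCP instance displayed in Table~\ref{tab:counterexample}, and showing that the optimum of its linear relaxation (constraints \eqref{CFLCP_CS_allocation}--\eqref{CFLCP_CS_preferences} together with $0\le x_{ij}\le 1$ and $y_j\in\{0,1\}$) is strictly smaller than the optimum of $\normalfont\texttt{(CFLCP)}_{\texttt{CS}}$ itself. Since $c_j=3$ and $|\I|=8$, in any feasible solution $\boldsymbol y$ (which stays binary in both models) at least three plants must be open, so only the $\binom{4}{3}+\binom{4}{4}=5$ open-plant configurations need to be examined on the integer side, and $f_j=2$ for all $j$ means the facility cost contributes $6$ or $8$ accordingly.

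First I would pin down the integer optimum $z_{\mathrm{IP}}$. For each of the five admissible sets of open plants, the customer-stability constraints \eqref{CFLCP_CS_preferences} in integral form say that whenever an open plant $j$ holds fewer than $c_j$ customers, every customer is assigned to a plant weakly preferred to $j$; this sharply restricts the feasible integral allocations for that configuration, after which one minimizes $\sum_{j} f_j y_j+\sum_{i,j} g_{ij}x_{ij}$ over the survivors. Taking the best of the five cases yields $z_{\mathrm{IP}}$, and I would report this value together with an allocation attaining it.

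Next I would exhibit a fractional point $(\boldsymbol{x}^{\star},\boldsymbol{y}^{\star})$ feasible for the relaxation whose objective value is strictly below $z_{\mathrm{IP}}$. The mechanism is that \eqref{CFLCP_CS_preferences} becomes genuinely weaker once $x$ is allowed to be fractional: if customer $i$ places mass $\alpha\in(0,1)$ on plants he weakly prefers to an open plant $j$, then \eqref{CFLCP_CS_preferences} only forces $\sum_{i'} x^{\star}_{i'j}\ge c_j\alpha+x^{\star}_{ij}$ rather than demanding that $j$ be completely full; this permits keeping a cheap plant only partially loaded while splitting a few customers (with values such as $\tfrac12$) across their preferred plants, which is impossible in an integral solution. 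After writing down the explicit $(\boldsymbol{x}^{\star},\boldsymbol{y}^{\star})$, I would verify the handful of constraints \eqref{CFLCP_CS_allocation}--\eqref{CFLCP_CS_preferences} directly and read off its objective value.

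I expect the main obstacle to be the bookkeeping in the first step: certifying that $z_{\mathrm{IP}}$ really is the integer optimum requires a finite but somewhat fiddly case analysis over the five open-plant configurations and the allocations left feasible by \eqref{CFLCP_CS_preferences}. By contrast, once one notices how fractional mass relaxes \eqref{CFLCP_CS_preferences}, the construction of the improving fractional point is short, and checking its feasibility is routine; the proposition then follows since relaxing \eqref{CFLCP_CS_xbinarias} changes the optimal value on this instance.
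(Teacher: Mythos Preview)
Your proposal is correct and follows essentially the same approach as the paper: both use the specific instance in Table~\ref{tab:counterexample} as a counterexample and argue that the optimum of the $x$-relaxed model is strictly below the integer optimum. The only difference is in execution: the paper simply reports the solver values ($18.2$ for the relaxation versus $19$ for the integer model) and, since all objective coefficients are integers, concludes immediately that no integral $x$ can attain $18.2$; you instead plan a by-hand case analysis for $z_{\mathrm{IP}}$ and an explicit fractional certificate, which is more laborious but conceptually identical.
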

\begin{proof}
Consider an instance such that $|\I|=8$, $|\J|=4$, the capacity of all plants is equal to 3, and the cost $f_j$ of opening plant $j$ is equal to 2 for any $j\in \J$. The allocation cost matrix and the preference matrix can be observed in Table \ref{tab:counterexample}. The previous instance constitutes a counterexample. The optimal solution given by formulation $\texttt{(CFLCP)}_{\texttt{CS}}$ with constraints \eqref{CFLCP_CS_xbinarias} relaxed has objective value equal to 18.2. Since all coefficients in the objective function are integer, there are no feasible solutions with integer $x$ variables that produce this optimal value. Actually, the true optimal value of this instance is 19. 
\end{proof}


In the following, we present an alternative customer stable model for which the integrality constraints on the $x$-variables can be relaxed. To state it, we use an additional set of binary variables $u_j$, $j\in \J$, equal to 1 if and only if plant $j$ is full in the solution. 
\begin{subequations} \label{CFLCP_CSR}
\begin{align}
\texttt{(CFLCP)}_{\texttt{CS}}^\texttt{R} \quad \min_{\boldsymbol{x},\boldsymbol{y}} \quad & \sum_{j\in \J} f_jy_j + \sum_{i\in \I} \sum_{j\in \J} g_{ij}x_{ij}  \label{CFLCP_CSR_FO} \\
\text{s.t.} \quad &\sum_{j\in \J} x_{ij} = 1, \quad \forall i\in \I, \label{CFLCP_CSR_allocation}\\
& \sum_{i\in \I} x_{ij}  \le (c_j-1)y_j + u_j, \quad \forall j\in\J, \label{CFLCP_CSR_capacity1}\\
& c_ju_j \le \sum_{i\in \I} x_{ij}, \quad \forall j\in\J, \label{CFLCP_CSR_capacity0}\\
&x_{ij} \le y_j, \quad \forall i \in \I, j \in \J, \label{CFLCP_CSR_allocationOpenPlants} \\
&  y_j \le u_j + \sum_{j'\in \J:\atop j'\preceq_i j} x_{ij'}, \quad \forall i\in \I, j\in \J, \label{CFLCP_CSR_preferences} \\
& x_{ij} \in \{0,1\}, \quad \forall i \in \I, j \in \J,  \label{CFLCP_CSR_xbinarias} \\
& u_j, y_j \in \{0,1\}, \quad \forall j \in \J.  \label{CFLCP_CSR_ybinarias}
\end{align}
\end{subequations}

Constraints \eqref{CFLCP_CSR_capacity1} and \eqref{CFLCP_CSR_capacity0} are now the \emph{capacity constraints}: \eqref{CFLCP_CSR_capacity1} ensure that $u_j=1$ when plant $j$ is full, whereas \eqref{CFLCP_CSR_capacity0} guarantee that $u_j=0$ when $j$ is undersubscribed. In addition, constraints \eqref{CFLCP_CSR_preferences} prohibit blocking customers. When $y_j=1$, either the plant is full ($u_j=1$) or customer $i$ is allocated to a plant $j'$ he likes the same or more than $j$ ($\sum_{j'\in \J: j'\preceq_i j} x_{ij'}=1$). 

\begin{prop} \label{pro: 3}
    The integrality constraints on the $x$-variables \eqref{CFLCP_CSR_xbinarias} can be relaxed in the model ${\normalfont\texttt{(CFLCP)}_{\texttt{CS}}^\texttt{R}}$.  
\end{prop}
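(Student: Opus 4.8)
The plan is to show that, once the binary variables $\boldsymbol{y}$ and $\boldsymbol{u}$ are fixed, the residual feasible set in $\boldsymbol{x}$ is an integral polytope, so that the linear program obtained from \eqref{CFLCP_CSR} by replacing \eqref{CFLCP_CSR_xbinarias} with $0\le x_{ij}\le 1$ (keeping \eqref{CFLCP_CSR_ybinarias}) always has an optimal solution with integral $\boldsymbol{x}$. Let $z_R$ be the optimal value of that relaxation and $z$ the optimal value of $\texttt{(CFLCP)}_{\texttt{CS}}^\texttt{R}$; trivially $z_R\le z$. First I would take an optimal solution $(\bar{\boldsymbol{x}},\bar{\boldsymbol{y}},\bar{\boldsymbol{u}})$ of the relaxation, freeze $\bar{\boldsymbol{y}},\bar{\boldsymbol{u}}$, and consider the set $Q(\bar{\boldsymbol{y}},\bar{\boldsymbol{u}})$ of all $\boldsymbol{x}\ge 0$ satisfying \eqref{CFLCP_CSR_allocation}, \eqref{CFLCP_CSR_capacity1}, \eqref{CFLCP_CSR_capacity0}, \eqref{CFLCP_CSR_allocationOpenPlants}, \eqref{CFLCP_CSR_preferences} with $\boldsymbol{y}=\bar{\boldsymbol{y}}$, $\boldsymbol{u}=\bar{\boldsymbol{u}}$; it is nonempty (it contains $\bar{\boldsymbol{x}}$) and bounded (by \eqref{CFLCP_CSR_allocation} and $\boldsymbol{x}\ge 0$). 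If $Q(\bar{\boldsymbol{y}},\bar{\boldsymbol{u}})$ is integral, then $\sum_{ij} g_{ij}x_{ij}$ attains its minimum over it at an integral vertex $\hat{\boldsymbol{x}}$ with $\sum_{ij} g_{ij}\hat{x}_{ij}\le\sum_{ij} g_{ij}\bar{x}_{ij}$, so $(\hat{\boldsymbol{x}},\bar{\boldsymbol{y}},\bar{\boldsymbol{u}})$ is feasible for \eqref{CFLCP_CSR}; hence $z\le z_R$ and $z=z_R$.

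The core of the argument is to identify the shape of $Q(\bar{\boldsymbol{y}},\bar{\boldsymbol{u}})$. If $\bar{y}_j=0$, then \eqref{CFLCP_CSR_allocationOpenPlants} forces $x_{ij}=0$ for all $i$, and then \eqref{CFLCP_CSR_capacity0} forces $\bar{u}_j=0$; if $\bar{u}_j=1$ (hence $\bar{y}_j=1$), \eqref{CFLCP_CSR_capacity1}--\eqref{CFLCP_CSR_capacity0} force $\sum_i x_{ij}=c_j$; and if $\bar{y}_j=1$, $\bar{u}_j=0$, then \eqref{CFLCP_CSR_capacity1} reads $\sum_i x_{ij}\le c_j-1$. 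The key reduction concerns \eqref{CFLCP_CSR_preferences}: these constraints are vacuous when $\bar{y}_j=0$ or $\bar{u}_j=1$, and when $\bar{y}_j=1$, $\bar{u}_j=0$ they read $\sum_{j'\preceq_i j} x_{ij'}\ge 1$, which, combined with \eqref{CFLCP_CSR_allocation}, is equivalent to $x_{ij'}=0$ for every $j'\succ_i j$. Letting $j^*_i$ be the plant most preferred by $i$ among the open, non-full ones (when one exists), the whole family \eqref{CFLCP_CSR_preferences} for customer $i$ therefore collapses to the single requirement that $x_{ij}=0$ unless $\bar{y}_j=1$ and $j\preceq_i j^*_i$. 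Thus, putting $P_i:=\{j:\bar{y}_j=1,\ j\preceq_i j^*_i\}$ (or $P_i:=\{j:\bar{y}_j=1\}$ if $i$ has no open non-full plant), all of \eqref{CFLCP_CSR_allocationOpenPlants} and \eqref{CFLCP_CSR_preferences} amount to fixing $x_{ij}=0$ for $j\notin P_i$.

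After eliminating those fixed-to-zero variables, $Q(\bar{\boldsymbol{y}},\bar{\boldsymbol{u}})$ is exactly the feasible region of a bipartite transportation problem: $x_{ij}\ge 0$ for $i\in\I$, $j\in P_i$; $\sum_{j\in P_i} x_{ij}=1$ for every $i$; and $L_j\le\sum_{i:\,j\in P_i} x_{ij}\le U_j$ for every open $j$, with $(L_j,U_j)=(c_j,c_j)$ if $\bar{u}_j=1$ and $(0,c_j-1)$ if $\bar{u}_j=0$ (the surviving inequalities from \eqref{CFLCP_CSR_allocationOpenPlants} become $x_{ij}\le 1$, redundant given the equalities and $\boldsymbol{x}\ge 0$). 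The coefficient matrix of this system is obtained from the node--arc incidence matrix $M$ of the bipartite graph on $\I\cup\J$ with edge set $\{(i,j):j\in P_i\}$ by duplicating the plant rows (one copy for the $\le$ bound, one for the $\ge$ bound) and negating some rows; since bipartite incidence matrices are totally unimodular and these operations preserve total unimodularity, the matrix is totally unimodular, and as all right-hand sides are integers, $Q(\bar{\boldsymbol{y}},\bar{\boldsymbol{u}})$ is integral by the Hoffman--Kruskal theorem. This closes the argument outlined above.

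The step I expect to require the most care is the collapse of the preference constraints \eqref{CFLCP_CSR_preferences} once $\bar{\boldsymbol{y}}$ and $\bar{\boldsymbol{u}}$ are frozen: one must check that only the inequality attached to the best open non-full plant $j^*_i$ is binding (those for worse open non-full plants being implied by it together with \eqref{CFLCP_CSR_allocation} and nonnegativity) and that what remains is genuinely a variable-fixing rather than a nontrivial prefix-sum constraint. It is precisely this collapse that keeps the interval/consecutive-ones structure of \eqref{CFLCP_CSR_preferences} from being glued onto the transportation block, which would otherwise jeopardise total unimodularity. Everything else — reducing to a fixed $(\bar{\boldsymbol{y}},\bar{\boldsymbol{u}})$, recognising the residual linear program as a transportation problem, and invoking total unimodularity — is routine, and the result parallels the well-known analogous property of the uncapacitated SPLPO formulation mentioned above.
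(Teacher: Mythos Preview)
Your proposal is correct and follows essentially the same approach as the paper's proof: fix the binary variables $\boldsymbol{y},\boldsymbol{u}$, observe that the preference constraints \eqref{CFLCP_CSR_preferences} then collapse to variable fixings (only the constraint at the most preferred open non-full plant $j^*_i$ binds, and it forces $x_{ij'}=0$ for $j'\succ_i j^*_i$), and recognise the residual system as a transportation-type problem whose constraint matrix is totally unimodular with integer right-hand side. Your treatment of the collapse step and of the case where no open non-full plant exists is in fact slightly more explicit than the paper's.
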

\begin{proof}
    For given integer values of variables $y$ and $u$, let us consider a partition of $\J$ into three sets:
    \begin{itemize}
        \item $\J_1 := \{j\in \J: y_j=u_j=0 \}$ is the set of unopened plants,
        \item $\J_2 := \{j\in \J: y_j=1, u_j=0 \}$ is the set of undersubscribed plants,
        \item $\J_3 := \{j\in \J: y_j=u_j=1 \}$ is the set of full plants.
    \end{itemize}

    Using this partition, we can rewrite the constraints in formulation ${\normalfont\texttt{(CFLCP)}_{\texttt{CS}}^\texttt{R}}$ eliminating $y$-variables and redundant constraints as follows:
    \begin{subequations} \label{CFLCP_pCSR}
\begin{align}
\min_{\boldsymbol{x}}\quad & \sum_{j\in \J_2\cap \J_3} f_j + \sum_{i\in \I} \sum_{j\in \J} g_{ij}x_{ij}  \label{CFLCP_pCSR_FO} \\
\text{s.t.} \quad &\sum_{j\in \J} x_{ij} = 1, \quad \forall i\in \I, \label{CFLCP_pCSR_allocation}\\
& \sum_{i\in \I} x_{ij}  \le 0, \quad \forall j\in\J_1, \label{CFLCP_pCSR_capacityJ1}\\
& \sum_{i\in \I} x_{ij}  \le (c_j-1), \quad \forall j\in\J_2, \label{CFLCP_pCSR_capacityJ2}\\
& \sum_{i\in \I} x_{ij}  = c_j, \quad \forall j\in\J_3, \label{CFLCP_pCSR_capacityJ3}\\
&  1 \le  \sum_{j'\in \J:\atop j'\preceq_i j} x_{ij'}, \quad \forall i\in \I, j\in \J_2, \label{CFLCP_pCSR_preferences} \\
& x_{ij} \in \{0,1\}, \quad \forall i \in \I, j \in \J.  \label{CFLCP_pCSR_xbinarias} 
\end{align}
\end{subequations}
Assuming w.l.o.g that $J_2\neq \emptyset$, let $j_i$ be the favorite plant of customer $i\in \I$. Then constraint $(i,j_i)$ dominates the rest of the constraints $(i,j)$ from set  \eqref{CFLCP_pCSR_preferences}. Hence, combining constraints \eqref{CFLCP_pCSR_allocation} and \eqref{CFLCP_pCSR_preferences}, we can further reduce the formulation to:
    \begin{subequations} \label{CFLCP_ppCSR}
\begin{align}
\min_{\boldsymbol{x}} \quad & \sum_{j\in \J_2\cap \J_3} f_j + \sum_{i\in \I} \sum_{j\in \J} g_{ij}x_{ij}  \label{CFLCP_ppCSR_FO} \\
\text{s.t.} \quad &\sum_{j\in \J:\atop j \preceq_i j_i} x_{ij} = 1, \quad \forall i\in \I, \label{CFLCP_ppCSR_allocation}\\
& \sum_{i\in \I} x_{ij}  \le 0, \quad \forall j\in\J_1, \label{CFLCP_ppCSR_capacityJ1}\\
& \sum_{i\in \I} x_{ij}  \le (c_j-1), \quad \forall j\in\J_2, \label{CFLCP_ppCSR_capacityJ2}\\
& \sum_{i\in \I} x_{ij}  = c_j, \quad \forall j\in\J_3, \label{CFLCP_ppCSR_capacityJ3}\\
& x_{ij} \in \{0,1\}, \quad \forall i \in \I, j \in \J.  \label{CFLCP_ppCSR_xbinarias} 
\end{align}
\end{subequations}
The constraints matrix in \eqref{CFLCP_ppCSR} is totally unimodular \cite[Corollary 2.8. for the partition of rows given by constraints $\{\eqref{CFLCP_ppCSR_allocation}$ and $\{\eqref{CFLCP_ppCSR_capacityJ1}-\eqref{CFLCP_ppCSR_capacityJ3}\}$]{wolsey1999integer} and the right-hand side vector is integer, so the integrality constraints \eqref{CFLCP_ppCSR_xbinarias} can be relaxed \cite[Proposition 2.3.]{wolsey1999integer}.
\end{proof}

Note that since usually $|\J| \ll |\I|$, the number of variables in model \eqref{CFLCP_CSR} is larger compared to model  \eqref{CFLCP_CS}, but the number of binary variables has been drastically reduced from $|\J| \cdot (|\I|+1)$ to only $2|\J|$.

\section{Pairwise stable models for the CFLCP} \label{sec:pairwisestablemodels}

We begin the section with a pairwise stable formulation that uses the initial sets of variables $y_j$, $\forall j\in \J$, for the location of the plants and $x_{ij}$, $\forall i\in \I$, $j\in\J$, for the allocation of the customers:
\begin{subequations} \label{CFLCP_PAIRS}
\begin{align}
\texttt{(CFLCP)}_{\texttt{PW}} \quad \min_{\boldsymbol{x},\boldsymbol{y}} \quad & \sum_{j\in \J} f_jy_j + \sum_{i\in \I} \sum_{j\in \J} g_{ij}x_{ij}  \label{CFLCP_PAIRS_FO} \\
\text{s.t.} \quad &\sum_{j\in \J} x_{ij} = 1, \quad \forall i\in \I, \label{CFLCP_PAIRS_allocation}\\
& \sum_{i\in \I} x_{ij}  \le c_jy_j, \quad \forall j\in\J, \label{CFLCP_PAIRS_capacity}\\
&x_{ij} \le y_j, \quad \forall i \in \I, j \in \J, \label{CFLCP_PAIRS_allocationOpenPlants} \\
&  c_j(y_j + x_{ij'} -1 ) \le \sum_{i'\in \I\setminus \{i\}: \atop j \prec_{i'} j'} x_{i'j}, \quad \forall i\in \I, j,j'\in \J: j \prec_i j', \label{CFLCP_PAIRS_bloquingpair1} \\
& x_{ij} \in \{0,1\}, \quad \forall i \in \I, j \in \J,  \label{CFLCP_PAIRS_xbinarias} \\
& y_j \in \{0,1\}, \quad \forall j \in \J.  \label{CFLCP_PAIRS_ybinarias}
\end{align}
\end{subequations}

Constraints \eqref{CFLCP_PAIRS_allocation}-\eqref{CFLCP_PAIRS_allocationOpenPlants} are the same as in formulation $\texttt{(CFLCP)}_{\texttt{CS}}$. In addition, constraints \eqref{CFLCP_PAIRS_bloquingpair1} guarantee that there are no blocking customers or blocking pairs. If $y_j + x_{ij'} -1=1$, then $j$ is open, $i$ is allocated to $j'$ and $i$ prefers $j$ over $j'$. Then there must exist $c_j$ customers allocated to $j$ that also prefer $j$ over $j'$.

Constraints \eqref{CFLCP_PAIRS_bloquingpair1} in $\texttt{(CFLCP)}_{\texttt{PW}}$ share some similarities with those in the integer programming model presented in \cite{kwanashie2014} and strengthened in \cite{delorme2019} for the Hospitals/Residents problem. However, their definition of blocking pair is different from the one stated here: we consider a pair of customers (which corresponds with a pair of residents), whilst they consider a pair hospital-resident to define the blocking pair because in their setting both the hospitals and the residents have preferences. Hence, the constraints ensuring a pairwise stable allocation have a different meaning in both settings.

\begin{prop}
    The integrality constraints on the $x$-variables \eqref{CFLCP_PAIRS_xbinarias} cannot be relaxed in model ${\normalfont \texttt{(CFLCP)}_{\texttt{PW}}}$.
\end{prop}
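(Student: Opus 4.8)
The plan is to exhibit a single small instance of the CFLCP for which the LP relaxation of $\texttt{(CFLCP)}_{\texttt{PW}}$ obtained by dropping \eqref{CFLCP_PAIRS_xbinarias} has an optimal value strictly below the integer optimum — ideally one whose LP optimum is fractional (e.g.\ non-integer), so that, because all objective coefficients $f_j,g_{ij}$ are integers, no integer-feasible point can attain it. This mirrors exactly the argument already used in the proof of the analogous Proposition for $\texttt{(CFLCP)}_{\texttt{CS}}$: build an instance, compute the LP optimum, observe it is not an integer, and conclude that the true (integer) optimum is strictly larger. So the skeleton is: (i) specify $|\I|,|\J|$, the capacities $c_j$, the opening costs $f_j$, the allocation-cost matrix $(g_{ij})$, and the preference lists $\prec_i$; (ii) solve the LP relaxation and report its (fractional) value; (iii) note integrality of the data forces the integer optimum to differ; (iv) optionally state the true integer optimum for completeness.

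**Key steps, in order.** First I would look for an instance in which constraints \eqref{CFLCP_PAIRS_bloquingpair1} genuinely bite — that is, where the cheapest allocation ignoring preferences creates a blocking pair, so the blocking-pair constraints force a more expensive assignment, but a \emph{fractional} $x$ can "split" a customer across two plants to dodge the $c_j(y_j+x_{ij'}-1)\le\cdots$ inequality more cheaply than any integral assignment can. Concretely, with $y_j+x_{ij'}-1$ strictly between $0$ and $1$ when $x_{ij'}\in(0,1)$, the right-hand side requirement is only partially activated, which is precisely the fractional slack the LP can exploit. Second, I would verify the LP optimum: since the reduced problem for fixed integer $y$ is still an LP over the $x$'s (but now \emph{without} the total-unimodularity structure that saved $\texttt{(CFLCP)}_{\texttt{CS}}^\texttt{R}$, because \eqref{CFLCP_PAIRS_bloquingpair1} couples many $x_{i'j}$ variables with coefficients depending on preferences), a fractional vertex is expected. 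Third, I would record the LP value (a non-integer such as the $18.2$-type figure in the $\texttt{(CFLCP)}_{\texttt{CS}}$ proof) and the integer optimum, and close with the one-line integrality argument. In practice the cleanest writeup is a table of $(g_{ij})$ and $(p_{ij})$ plus the line "the LP relaxation has optimal value $v^\ast\notin\Z$, hence the integer optimum is strictly larger; in fact it equals $\lceil v^\ast\rceil$ (or some explicitly stated value)."

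**Main obstacle.** The hard part is not the proof architecture — which is a direct copy of the earlier counterexample-based Proposition — but \emph{finding} the instance: one must engineer preference lists and costs so that (a) the integral pairwise-stable optimum is strictly suboptimal for the LP, and (b) the gap is witnessed by a genuinely fractional $x$ rather than merely by a different integral $y$. A secondary subtlety is bookkeeping: \eqref{CFLCP_PAIRS_bloquingpair1} is indexed over all triples $i,j,j'$ with $j\prec_i j'$, so when presenting the certificate one should either give the full fractional solution $(\boldsymbol{x}^\ast,\boldsymbol{y}^\ast)$ explicitly and let the reader (or a solver) check feasibility, or isolate the handful of tight constraints that pin it down. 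I would opt for the former — state the instance data in a table, assert the LP value, and note that verification is routine — exactly as the paper does for $\texttt{(CFLCP)}_{\texttt{CS}}$.
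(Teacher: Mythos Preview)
Your proposal is correct and matches the paper's approach exactly: the paper provides a counterexample instance, reports that the LP relaxation of $\texttt{(CFLCP)}_{\texttt{PW}}$ has a non-integer optimal value (namely $18.8$), and concludes that the integer optimum (which is $19$) is strictly larger. In fact the paper reuses the very same $8$-customer, $4$-plant instance from the $\texttt{(CFLCP)}_{\texttt{CS}}$ counterexample (Table~\ref{tab:counterexample}), so your instinct to mirror that earlier proof is spot on---you need not construct a fresh instance at all.
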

\begin{proof}
     As a counterexample, consider the instance given in Table \ref{tab:counterexample} with $|\I|=8$, $|\J|=4$, the capacity of all plants is equal to 3, and the cost $f_j$ of opening plant $j$ is equal to 2 for any $j\in \J$. The optimal solution given by formulation ${\rm \texttt{(CFLCP)}_{\texttt{PW}}}$ with \eqref{CFLCP_PAIRS_xbinarias} relaxed has objective value equal to 18.8, whereas the  optimal value of this instance is 19. 
\end{proof}
To provide a reduced pairwise stable formulation, we use variables $u_j$, $\forall j\in \J$, defined in the previous section for model \eqref{CFLCP_CSR}. We also define an additional set of binary variables that help avoid blocking pairs. Thus, $v_{jj'}\in\{0,1\}$ $\forall j,j'\in \J$, $j\neq j'$, such that $v_{jj'}=1$ if there exists a customer $i\in \I$ who prefers $j$ over $j'$ but is assigned to $j'$.
\begin{subequations} \label{CFLCP_PAIRSR}
\begin{align}
\texttt{(CFLCP)}_{\texttt{PW}}^\texttt{R} \quad \min_{\boldsymbol{x},\boldsymbol{y},\boldsymbol{v}} \quad & \sum_{j\in \J} f_jy_j + \sum_{i\in \I} \sum_{j\in \J} g_{ij}x_{ij}  \label{CFLCP_PAIRSR_FO} \\
\text{s.t.} \quad &\sum_{j\in \J} x_{ij} = 1, \quad \forall i\in \I, \label{CFLCP_PAIRSR_allocation}\\
& \sum_{i\in \I} x_{ij}  \le (c_j-1)y_j+u_j, \quad \forall j\in\J, \label{CFLCP_PAIRSR_capacity1}\\
& c_ju_j \le \sum_{i\in \I} x_{ij}, \quad \forall j\in\J, \label{CFLCP_PAIRSR_capacity0}\\
&x_{ij} \le y_j, \quad \forall i \in \I, j \in \J, \label{CFLCP_PAIRSR_allocationOpenPlants} \\
&  y_j \le u_j + \sum_{j'\in \J:\atop j'\preceq_i j} x_{ij'}, \quad \forall i\in \I, j\in \J, \label{CFLCP_PAIRSR_blockingcust} \\
&  v_{jj'} + v_{j'j} \leq 1, \quad \forall j\neq j'\in \J, \label{CFLCP_PAIRSR_blockingpair1} \\
&  x_{ij'} \le v_{jj'}, \quad \forall i\in \I, j,j'\in \J: j \prec_i j', \label{CFLCP_PAIRSR_blockingpair2} \\
& x_{ij} \in \{0,1\}, \quad \forall i \in \I, j \in \J,  \label{CFLCP_PAIRSR_xbinarias} \\
& u_j, y_j \in \{0,1\}, \quad \forall j \in \J,  \label{CFLCP_PAIRSR_ybinarias} \\
& v_{jj'} \in \{0,1\}, \quad \forall j\neq j' \in \J.  \label{CFLCP_PAIRSR_vbinarias} 
\end{align}
\end{subequations}
Constraints \eqref{CFLCP_PAIRSR_allocation}-\eqref{CFLCP_PAIRSR_blockingcust} are the same as in model ${\normalfont\texttt{(CFLCP)}_{\texttt{CS}}^\texttt{R}}$, and provide a formulation that yields customer stable allocations. Moreover, constraints \eqref{CFLCP_PAIRSR_blockingpair2} ensure that if there is a customer $i$ assigned to $j'$ but prefers $j$ over $j'$, then $v_{jj'}=1$. Together with \eqref{CFLCP_PAIRSR_blockingpair1}, these constraints forbid blocking pairs.

\begin{prop}
    The integrality constraints on the $x$-variables \eqref{CFLCP_PAIRSR_xbinarias} can be relaxed in model ${\normalfont \texttt{(CFLCP)}_{\texttt{PW}}^\texttt{R}}$.
\end{prop}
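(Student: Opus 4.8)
The plan is to adapt the proof of Proposition~\ref{pro: 3}. I fix an arbitrary integral choice of $\boldsymbol{y},\boldsymbol{u},\boldsymbol{v}$ and argue that the polytope it induces in the $\boldsymbol{x}$-variables is integral; since the objective is linear in $\boldsymbol{x}$ for fixed $\boldsymbol{y}$, and since an integral $\boldsymbol{x}$-polytope is nonempty exactly when it contains an integral point, this yields that the LP relaxation of ${\normalfont\texttt{(CFLCP)}_{\texttt{PW}}^\texttt{R}}$ obtained by dropping \eqref{CFLCP_PAIRSR_xbinarias} has the same feasible $(\boldsymbol{y},\boldsymbol{u},\boldsymbol{v})$ and the same optimal value as the MILP.

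For the reduction itself, I first note that constraints \eqref{CFLCP_PAIRSR_allocation}--\eqref{CFLCP_PAIRSR_blockingcust} are literally those of ${\normalfont\texttt{(CFLCP)}_{\texttt{CS}}^\texttt{R}}$, so the treatment in Proposition~\ref{pro: 3} carries over verbatim: partitioning $\J$ into $\J_1,\J_2,\J_3$ according to $(\boldsymbol{y},\boldsymbol{u})$, constraints \eqref{CFLCP_PAIRSR_capacity1}--\eqref{CFLCP_PAIRSR_allocationOpenPlants} become $\sum_i x_{ij}\le 0$ on $\J_1$, $\sum_i x_{ij}\le c_j-1$ on $\J_2$ and $\sum_i x_{ij}=c_j$ on $\J_3$, while the only surviving part of \eqref{CFLCP_PAIRSR_blockingcust} is $\sum_{j'\preceq_i j}x_{ij'}\ge 1$ for $j\in\J_2$, which for each $i$ is dominated by the constraint of the plant $j_i\in\J_2$ most preferred by $i$; combining it with \eqref{CFLCP_PAIRSR_allocation} replaces the allocation row of $i$ by $\sum_{j\preceq_i j_i}x_{ij}=1$ and removes every column $x_{ij}$ with $j\succ_i j_i$. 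It then remains to handle the genuinely new constraints \eqref{CFLCP_PAIRSR_blockingpair1}--\eqref{CFLCP_PAIRSR_blockingpair2}: with $\boldsymbol{v}$ fixed and integral, \eqref{CFLCP_PAIRSR_blockingpair1} contains no $\boldsymbol{x}$-variable, and each inequality of \eqref{CFLCP_PAIRSR_blockingpair2} is vacuous when $v_{jj'}=1$ and forces $x_{ij'}=0$ when $v_{jj'}=0$, so its net effect is merely to delete further columns $x_{ij'}$. What survives is therefore the same incidence-type system obtained in the proof of Proposition~\ref{pro: 3} with some columns removed: the reduced allocation equalities (one $0/1$ row per customer) together with the capacity rows (one $0/1$ row per plant, integer right-hand side). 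By \cite[Corollary 2.8.]{wolsey1999integer}, with the row partition into customer rows and plant rows --- every surviving column $x_{ij}$ has at most one $+1$ in each class --- this matrix is totally unimodular, the right-hand side is integral, and \cite[Proposition 2.3.]{wolsey1999integer} gives the claim.

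The step I expect to require the most care is the bookkeeping around the new $\boldsymbol{v}$-constraints: one must check that fixing $\boldsymbol{v}$ integrally really does collapse \eqref{CFLCP_PAIRSR_blockingpair2} to column deletions (this fails for fractional $\boldsymbol{v}$, which is precisely why the proposition only relaxes $\boldsymbol{x}$), and that the three sources of forced zeros --- plants in $\J_1$, the preference reduction, and the pairs with $v_{jj'}=0$ --- are mutually consistent and never require reinstating a dropped row. Neither issue is substantive, since deleting columns can only shrink the feasible set and cannot destroy total unimodularity; the remainder is a direct transcription of Proposition~\ref{pro: 3}.
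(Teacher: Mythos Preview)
Your proposal is correct and follows essentially the same approach as the paper: fix integral $(\boldsymbol{y},\boldsymbol{u},\boldsymbol{v})$, invoke the total-unimodularity reduction from Proposition~\ref{pro: 3} for constraints \eqref{CFLCP_PAIRSR_allocation}--\eqref{CFLCP_PAIRSR_blockingcust}, and then argue that the extra constraints \eqref{CFLCP_PAIRSR_blockingpair2} do not spoil TU. The only cosmetic difference is that the paper handles \eqref{CFLCP_PAIRSR_blockingpair2} by appending unit rows (a single $+1$ per row) to the TU matrix, whereas you phrase the same thing as deleting the columns $x_{ij'}$ forced to zero when $v_{jj'}=0$; both operations trivially preserve total unimodularity.
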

\begin{proof}
For given integer values of variables $y$, $u$ and $v$, observe that the constraints in model $\texttt{(CFLCP)}_{\texttt{PW}}^\texttt{R}$ involving the variables $x$ match those in $\texttt{(CFLCP)}_{\texttt{PW}}$, except for constraints \eqref{CFLCP_PAIRSR_blockingpair2}. 

From the proof of Proposition \ref{pro: 3}, it follows that the constraint matrix, excluding \eqref{CFLCP_PAIRSR_blockingpair2}, is totally unimodular. The addition of constraints \eqref{CFLCP_PAIRSR_blockingpair2} does not alter this property, as it merely appends rows of zeros with a single one into the matrix, preserving total unimodularity.

The right-hand side vector is integer, so the integrality constraint \eqref{CFLCP_PAIRSR_xbinarias} can be relaxed \cite[Proposition 2.3]{wolsey1999integer}.
\end{proof}

We compare formulations $\texttt{(CFLCP)}_{\texttt{CS}}$ and $\texttt{(CFLCP)}_{\texttt{PW}}$ with their counterparts $\texttt{(CFLCP)}_{\texttt{CS}}^\texttt{R}$ and $\texttt{(CFLCP)}_{\texttt{PW}}^\texttt{R}$ in Section \ref{sec:compExp}.

\section{Cyclic-coalition stable models for the CFLCP} \label{sec:cycliccoalitionstablemodels}
In this section, we minimize the overall cost of location and allocation considering only cyclic-coalition stable allocations for customers.

\subsection{Basic Model}
 Our first integer linear model is based on the following characterizations of cyclic-coalition stable allocations:

\begin{lemma} \label{lem:permutation}
 Consider an instance of {\rm CFLCP} with a set of customers $\I=\{1,2,\dots,|\I|\}$ and a set of plants $\J$. Let $\J' \subseteq \J$ with $|\J'| \ge |\I|$ and let $\sigma: I \rightarrow I$ be a permutation of the customers in $I$. Next, consider the allocation that results from allocating customers in the order given by $\sigma$ (that is, first $\sigma(1)$, then $\sigma(2)$ and so on) to their favorite plant (not full) in $\J'$. The resulting allocation is cyclic-coalition stable.   
\end{lemma}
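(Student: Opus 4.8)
The plan is to prove the contrapositive: I will show that if the constructed allocation $A$ admits a blocking customer, a blocking pair, or a blocking set, then the greedy serial dictatorship procedure (processing customers in the order $\sigma(1), \sigma(2), \dots, \sigma(|\I|)$, each grabbing their favorite not-yet-full plant in $\J'$) could not have produced $A$ — a contradiction. The key structural fact I will extract from the construction is a monotonicity property: \emph{when customer $i$ is processed and assigned to plant $j$, every plant $i$ strictly prefers to $j$ is already full at that moment, and once a plant is full it stays full for the remainder of the procedure}. (Here the hypothesis $|\J'| \ge |\I|$ guarantees the procedure never gets stuck: there is always an available plant, so every customer does get allocated.) I would state and justify this observation first as it is the engine of all three cases.

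First, blocking customers. Suppose $i$ is assigned to $j$ in $A$ but prefers some open $j' \in \J'$ with fewer than $c_{j'}$ customers in $A$. By the monotonicity observation, $j'$ was full at the time $i$ was processed, hence full at the end of the procedure — contradicting that $j'$ is undersubscribed in $A$. (A small point: any plant that receives a customer during the procedure lies in $\J'$, so it is legitimate to talk about $j' \in \J'$; also "open" in $A$ for such an allocation simply means $j'$ received at least one customer.)

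Next, blocking sets (which subsumes blocking pairs by taking $|\bar I| = 2$ — I will remark that the $n \geq 2$ argument is uniform even though the definition in the paper reserves "blocking set" for $n \geq 3$). Suppose $\bar I = \{i_1,\dots,i_n\}$ are assigned to distinct plants $\bar J = \{j_1,\dots,j_n\}$ and there is a bijection $\gamma$ with $i_t$ strictly preferring $\gamma(i_t)$ to $j_t$ for all $t$. Let $i_s$ be the customer in $\bar I$ that is processed \emph{last} by the procedure, and let $j_s$ be its assigned plant. Then $\gamma(i_s) \in \bar J$, say $\gamma(i_s) = j_r$ for some $r$, and $i_s$ strictly prefers $j_r$ to $j_s$. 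By monotonicity, $j_r$ was full at the moment $i_s$ was processed, and stays full afterward, so it has exactly $c_{j_r}$ customers in $A$. But $i_r$ — who is assigned to $j_r$ in $A$ — was processed \emph{no later} than $i_s$ (since $i_s$ is the last of $\bar I$), so $i_r$ was already counted among the $c_{j_r}$ occupants of $j_r$ when $i_s$ was processed; meanwhile $i_s$ itself is assigned to $j_s \neq j_r$, so $i_s$ is not among those occupants. This is the subtle part of the argument and where I expect to spend the most care: I must argue that the $c_{j_r}$ customers occupying $j_r$ at processing-time-of-$i_s$ all ultimately remain at $j_r$ (true, since assignments are never revoked in a serial dictatorship), and that $i_s$'s strict preference for $j_r$ over $j_s$ combined with $j_r$ being available-or-not at the right moment forces the contradiction — concretely, if $j_r$ had \emph{not} been full when $i_s$ was processed, the procedure would have assigned $i_s$ to $j_r$ (or something $i_s$ likes even more) rather than to $j_s$, contradicting $j_s = $ the actual assignment.

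The main obstacle is bookkeeping the temporal/monotonicity argument cleanly: making precise "full at the time $i_s$ is processed" versus "full in the final allocation $A$", and handling the fact that customers in $\bar J$ may themselves be the ones filling up $j_r$. Choosing the last-processed customer $i_s$ of the coalition is exactly what defuses this — every other member of the coalition, in particular $i_r$, is already placed when $i_s$ moves, so no circular dependency arises. Once that is set up, each of the three cases is a two-line contradiction. I would also note explicitly that the feasibility of $A$ (every customer allocated, capacities respected) is immediate from the procedure plus $|\J'| \ge |\I|$, so $A$ is a legitimate feasible allocation to which the stability definitions apply.
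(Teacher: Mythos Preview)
Your blocking-customer argument is fine, but the blocking-pair/blocking-set argument has a genuine gap. You pick the \emph{last}-processed coalition member $i_s$, note that $i_s$ prefers $j_r=\gamma(i_s)$ to $j_s$, and conclude from the greedy rule that $j_r$ was full when $i_s$ was processed (hence full in $A$). That is all correct --- but it is not a contradiction. Nothing in the definition of a blocking set requires any plant to be undersubscribed; a blocking set can (and typically does) involve only full plants. Your final sentence (``if $j_r$ had not been full \ldots'') merely re-derives that $j_r$ was full; it does not contradict the blocking-set hypothesis. So as written, the argument never closes.

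The paper's proof pivots on the \emph{first}-processed coalition member instead, and this is what makes it work in one line: when the first customer $i_\ast$ of $\bar I$ is processed, every other plant in $\bar J$ still has a free slot (namely the slot that its coalition member will occupy later), so all of $\bar J$ is available to $i_\ast$; since the procedure assigns $i_\ast$ to $j_\ast\in\bar J$, that plant is $i_\ast$'s favourite in $\bar J$, and $i_\ast$ cannot strictly prefer $\gamma(i_\ast)$. Your last-processed idea can be salvaged, but you have to look in the other direction along the cycle: take the coalition member $i_t$ with $\gamma(i_t)=j_s$. Since $i_t$ prefers $j_s$ to $j_t$ and $i_t$ is processed before $i_s$, the plant $j_s$ was already full when $i_t$ was processed --- hence full before $i_s$ arrived, contradicting that $i_s$ was assigned to $j_s$. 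Either route works; the one you actually wrote does not.
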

\begin{proof}
 Let $\bar{I}\subseteq \I$ be a set of customers allocated to different plants $\bar{J}$, respectively, following the order given by permutation $\sigma$. Let $\bar{i}:= \min  \{i\in \bar{I}\}$. Then customer $\sigma(\bar{i})$ was allocated to his favorite plant among the ones in $\bar{J}$ (none of them was at full capacity because the rest of the customers in $\bar{I}$ were allocated afterwards). Hence, $\bar{i}$ prefers their choice to that of the customers in $\bar{I}$, so $\bar{I}$ is not a blocking set (or a blocking pair if $|\I|=2$). Since each customer is allocated to their favorite undersubscribed plant, the fact that there are no blocking customers is straightforward.
\end{proof}

\begin{lemma}
Every blocking set contains a blocking subset in which the assigned plants are all distinct.
\end{lemma}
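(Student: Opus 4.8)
The plan is to recast a blocking set as a permutation and then argue by choosing a blocking sub-configuration of minimum cardinality. Write a blocking set $\bar{I}=\{i_1,\dots,i_n\}$, with $i_t$ allocated in $A$ to plant $j_t$ (the $j_t$ need not be distinct), together with its witnessing bijection $\gamma$, as a permutation $\pi$ of $\{1,\dots,n\}$ with $i_t$ preferring $j_{\pi(t)}$ to $j_t$ for every $t$; in particular $\pi$ is fixed-point-free and $j_{\pi(t)}\neq j_t$. Call a set $S\subseteq\bar{I}$ of size at least $2$, together with a permutation $\rho$ of $S$ such that each $i\in S$ prefers the plant of $\rho(i)$ to its own plant, a \emph{blocking subset} of $\bar{I}$; a blocking subset of size $2$ is a blocking pair and one of size at least $3$ is a blocking set. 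Note that each cycle of $\pi$ of length $\ell\ge 2$, equipped with the cyclic reassignment, is already a blocking subset, so blocking subsets of $\bar{I}$ exist.

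Among all blocking subsets of $\bar{I}$, choose one, say $(S,\rho)$, of minimum cardinality. First, $\rho$ must be a single cycle: otherwise it decomposes into two or more cycles, each of length at least $2$ (as $\rho$ is fixed-point-free), and each such cycle is a strictly smaller blocking subset of $\bar{I}$, contradicting minimality. Relabelling, write $S=\{i_1,\dots,i_m\}$ with $i_k$ allocated to plant $P_k$ and preferring $P_{k+1}$ (indices modulo $m$); it remains to prove $P_1,\dots,P_m$ are pairwise distinct. Suppose not, and pick $a<b$ with $P_a=P_b$. Since $i_k$ strictly prefers $P_{k+1}$ to $P_k$, consecutive plants around the cycle (including the pair $P_m,P_1$) are distinct, so $b-a\ge 2$ and $(a,b)\neq(1,m)$, hence $2\le b-a\le m-2$. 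Now split the cycle at the repeated plant: the customers $\{i_a,\dots,i_{b-1}\}$ with $i_k\mapsto P_{k+1}$ for $a\le k\le b-2$ and $i_{b-1}\mapsto P_b=P_a$ form a blocking subset, and the remaining customers $\{i_b,\dots,i_m,i_1,\dots,i_{a-1}\}$ with $i_k\mapsto P_{k+1}$ and $i_{a-1}\mapsto P_a=P_b$ form another. Both have cardinality in $\{2,\dots,m-2\}$, strictly less than $|S|$, contradicting minimality. Hence the plants assigned to $S$ are pairwise distinct, and $S$ is the required blocking subset of $\bar{I}$.

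The delicate point is checking that the two pieces obtained by cutting the cycle at $P_a=P_b$ really are blocking subsets: one must verify that each proposed reassignment is a permutation of the respective customers onto the plants they currently occupy and that every customer strictly improves. This is exactly where the equality $P_a=P_b$ is used — it is what allows $i_{b-1}$ to be rerouted to the plant of $i_a$ (and $i_{a-1}$ to the plant of $i_b$) while still strictly improving, since $i_{b-1}$ prefers $P_b=P_a$ to $P_{b-1}$ and $i_{a-1}$ prefers $P_a=P_b$ to $P_{a-1}$; combinatorially this is just splitting the cyclic shift of an $m$-cycle into the cyclic shifts of two shorter cycles. The bounds $2\le b-a\le m-2$ guarantee both pieces contain at least two customers, so the descent on cardinality (implicit in the choice of a minimal blocking subset) terminates. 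A size-$2$ minimal blocking subset is trivially fine, as a blocking pair has two distinct plants by definition.
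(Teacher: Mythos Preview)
Your proof is correct. Both your argument and the paper's rest on the same underlying idea---if a blocking configuration has a repeated plant, one can pass to a strictly smaller one---but the organisation differs. The paper proceeds by explicit iteration: given a repetition $j_a=j_b$, it examines whether the orbit of $a$ under $\gamma$ ever reaches $b$, and in each case extracts a smaller blocking set by taking either a full orbit or a truncated path with one rewired arrow, then repeats until no repetitions remain. You instead invoke minimality up front: pick a blocking subset of least cardinality, observe it must be a single cycle (else a proper cycle of $\rho$ would already be smaller), and split that cycle into two shorter cycles at any repeated plant to obtain a contradiction. Your route avoids the paper's case distinction and makes the descent structure more transparent; the paper's route is slightly more constructive in that it tracks exactly which customers are discarded at each step.
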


\begin{proof}
Let \( \bar{\I} = \{i_1, i_2, \dots, i_s\} \subseteq \I \), with corresponding assigned plants \( j_1, j_2, \dots, j_s \in \J \), and let \( \gamma : \{1, 2, \dots, s\} \to \{1, 2, \dots, s\} \) be a bijection such that each customer \( i_t \) prefers plant \( j_{\gamma(t)} \) over its current allocation \( j_t \), as in the definition of blocking set. Suppose, by contradiction, that there exist \( a, b \in \{1, \dots, s\} \), with \( a \ne b \), such that \( j_a = j_b \).

If \( \gamma^r(a) \ne b \) for all \( r = 1, 2, \dots, s-1 \), where \( \gamma^r \) denotes the composition of \( \gamma \) with itself \( r \) times, consider the subset of customers
\[
    \{i_{\gamma^r(a)} : r = 1, 2, \dots, s-1\}
\]
along with their assigned plants and the induced permutation. This subset excludes the specific repetition between \( j_a \) and \( j_b \).

If instead \( \gamma^r(a) = b \) for some \( r = 1, 2, \dots, s-1 \), let \( r^\star \) be the smallest such index. Consider the subset
\[
    \{i_{\gamma^r(a)} : r = 1, 2, \dots, r^\star\} \setminus \{i_a\}
\]
with their assigned plants and the induced permutation \( \gamma^\star \), modified so that \( \gamma^\star(b) = \gamma(a) \). This again eliminates the repetition between \( j_a \) and \( j_b \).

In both cases, the resulting subset remains a blocking set, since the induced permutation preserves the preference relations among the remaining customers. In particular, the subset has cardinality at least two.

Since the customer set is finite, we can repeat this procedure to eliminate all repeated plant assignments. Thus, we eventually obtain a blocking subset in which the assigned plants are all distinct or a blocking pair.
\end{proof}
 
The proposed model creates a permutation in the customers' set to allocate the customers, preventing any blocking customer/pair/set. With this aim, we use a set of binary variables $y_j$, $j\in \J$, equal to 1 if $j$ is open. Next, let $x_{ij}^r$, for $i,r\in \I$, $j\in \J$, be a binary variable equal to 1 if and only if the customer $i$ is assigned to the plant $j$ and $\sigma(i)=r$ in the permutation $\sigma$ that gives rise to the feasible allocation. Furthermore, let $u^r_j$, for $j\in \J$, $r\in \I$, be a binary variable that takes value 1 if and only if the plant $j$ fills up when the customer $i=\sigma^{-1}(r)$ is assigned. With these variables, the model is:
\begin{subequations} \label{CFLCProunds}
\begin{align} 
\texttt{(CFLCP)}_{\texttt{CC}} \quad \min_{\boldsymbol{x}, \boldsymbol{y}, \boldsymbol{u}} \quad & \sum_{j\in \J} f_jy_j + \sum_{i\in \I} \sum_{j\in \J} \sum_{r\in \I} g_{ij}x_{ij}^r  \label{CFLCPBrounds_OF1} \\
\text{s.t.} \quad & \sum_{j\in \J}\sum_{r\in\I} x_{ij}^r = 1, \quad \forall i\in \I, \label{CFLCPBrounds_allocation} \\
& \sum_{i \in \I}\sum_{j\in\J} x_{ij}^r = 1, \quad \forall r\in \I, \label{CFLCPBrounds_allocation2} \\
& \sum_{r\in \I} x_{ij}^r  \le y_j, \quad \forall i\in \I,j\in\J, \label{CFLCPBrounds_assignmenttoopenplants}\\
& \sum_{r\in \I} u_j^r \le y_j, \quad \forall j\in\J, \label{CFLCPBrounds_seagotasiseabre} \\
& \sum_{j\in \J} u_j^r \le 1, \quad \forall r\in\I, \label{CFLCPbrounds_seagotaunacadavez} \\
&  \sum_{i\in \I} \sum_{r'=1}^r x^{r'}_{ij} \le (c_j-1)y_j + \sum_{r'=1}^r u_j^{r'}, \quad \forall j\in\J, r \in \I  \label{CFLCPBrounds_uiguala0} \\
& c_j\sum_{r'=1}^r u_j^{r'} \le \sum_{i\in \I}\sum_{r'=1}^r x^{r'}_{ij}, \quad \forall j\in \J,r\in\I  \label{CFLCPBrounds_uiguala1} \\
&  y_j - \sum_{r'=1}^{r-1} u_j^{r'} \le \sum_{j' \preceq_i j}\sum_{r'=1}^r x_{ij'}^{r'} + \sum_{j'\in \J} \sum_{r'=r+1}^{|\I|}  x_{ij'}^{r'}, \quad \forall i,r\in \I,j\in\J, \label{CFLCPBrounds_preferences} \\
& y_j, x_{ij}^r, u_j^r \in \{0,1\}, \quad \forall i,r \in \I, j \in \J.  \label{CFLCPBrounds_binarias}
\end{align}
\end{subequations}
Constraints \eqref{CFLCPBrounds_allocation} ensure that $i$ is assigned to a single $j\in \J$, and together with  \eqref{CFLCPBrounds_allocation2} they guarantee that $\sigma$ is a permutation. Constraints \eqref{CFLCPBrounds_assignmenttoopenplants} guarantee that $i$ is allocated to an open plant. They are redundant when \eqref{CFLCPBrounds_uiguala0} are included, but are known to strengthen the linear relaxation in a similar manner to \eqref{CFLCP_CS_allocationOpenPlants}. Constraints \eqref{CFLCPBrounds_seagotasiseabre} impose that a plant can only fill up if it is open in the feasible solution. Constraints \eqref{CFLCPbrounds_seagotaunacadavez} ensure that up to one plant $j$ fills up with customer $\sigma(r)$. These constraints again are redundant when \eqref{CFLCPBrounds_uiguala0} and \eqref{CFLCPBrounds_uiguala1} are included, but significantly strengthen the formulation. Due to the capacity of the plants, it must hold $\sum_{i\in \I} \sum_{r'=1}^r x^{r'}_{ij} \le c_j$ for any $j$, $r$. When $\sum_{i\in \I} \sum_{r'=1}^r x^{r'}_{ij} = c_j$, \eqref{CFLCPBrounds_uiguala0} force $u^r_j=1$; and when $\sum_{i\in \I} \sum_{r'=1}^r x^{r'}_{ij} < c_j$, \eqref{CFLCPBrounds_uiguala1} guarantee that $u^r_j=0$. Constraints \eqref{CFLCPBrounds_preferences} ensure that the solution is cyclic-coalition stable, i.e.\ that every customer is allocated to their most preferred plant (among the ones that are open and not full) following the order given by permutation $\sigma$. 
Consider a plant $j$ that is open ($y_j=1$) and not full when the first $r-1$ customers are allocated ($\sum_{r'=1}^{r-1} u_j^{r'}=0$). Then for any customer $i$ and for $r$, the left-hand side of the corresponding constraint in \eqref{CFLCPBrounds_preferences} is equal to 1, so one of the two sums of the right-hand side must be equal to one. In the first case,  $i=\sigma(\bar{r})$ with $\bar{r}\le r$ and $i$ is allocated either to plant $j$ or to a plant better than $j$ to $i$. In the second case, $\bar{r} > r$ and thus the constraint trivially holds.

\subsection{Reduced Model}
In this section, we propose a reduced model that provides the same set of cyclic-coalition stable allocations as model $\texttt{(CFLCP)}_{\texttt{CC}}$. Before introducing it, we illustrate the disadvantages of the previous one and the reasoning leading to the reduced model by means of an example.

\begin{table}
    \centering
    \begin{tabular}{ccccccccc}
    \toprule    
     & Cust.\ 1 & Cust.\ 2 & Cust.\ 3 & Cust.\ 4  & Cust.\ 5 & Cust.\ 6 & Cust.\ 7 & Cust.\ 8\\ \cmidrule(lr){2-9}
    Plant 1 & 1 &  2 & 3 & 3 & 2 &  1 & 1 & 1  \\ 
    Plant 2 & 2 &  1 & 2 & 1 & 1 &  3 & 2 & 3 \\ 
    Plant 3 & 3 &  3 & 1 & 2 & 3 &  2 & 3 & 2 \\
    \bottomrule
\end{tabular}
\caption{Preference matrix for Example \ref{example2}. All the plants have capacity equal to 3.} 
\label{tab:toyexample2}
\end{table}

\begin{example} \label{example2} 
    Consider the toy instance with $\I=8$, $\J=3$, $c_j=3$ $\forall j\in \J$ and the preference matrix given in Table \ref{tab:toyexample2}. Since all the customers need to be allocated, all the facilities must be open in any feasible allocation and two of them fill up. 

    We can characterize every cyclic-coalition stable allocation following Lemma \ref{lem:permutation}. For instance, permutation $P=(1,2,3,4,6,7,5,8)$ results in customers $\{1,6,7\}$ being allocated to plant $1$, $\{2,4,5\}$ to plant $2$, and $\{3,8\}$ to plant $3$. The number of permutations of the customers in this toy instance amounts to $8!=40320$. However, many different permutations result in the same allocation. For instance, permutation $P'=(5, 2, 6, 7, 1, 3, 4, 8)$ results in the same allocation than $P$, as well as any permutation where customer $8$ gets to choose after customers $1$, $6$ and $7$. This is due to the fact that there are four customers whose first choice is plant $1$, so the plant is full when $8$ gets to choose. In fact, there are only 10 cyclic-coalition stable allocations for this instance, and all of them can be found in Table \ref{tab:toyexample2solution} (the allocation given by $P$ is the first one). Given that studying every single permutation does not seem very efficient, let us see how the amount of permutations to consider can be reduced. 

    The first six customers of $P$ are allocated to their favorite plant because all of the plants are available. However, plant $1$ fills up with customer $7$, so $5$ and $8$ can only choose between plants $2$ and $3$. Customer $5$ prefers plant $2$ over $3$ (casually, plant 2 is his favorite) and after his allocation plant $2$ is also full. Finally, $8$ is allocated to plant $3$. Clearly, any rearrangement of the first seven customers results in the same allocation, because their favorite plant is not full when they get to choose. So, as a representative of the permutation $P$, we can consider $\bar{P}=(1,6,7,2,4,5,3,8)$. In $\bar{P}$, the first three customers fill up plant $1$, the next three fill up plant $2$, and the last two are allocated to $3$. Moreover, the order in which customers $1$, $6$ and $7$ are allocated to plant $1$ is also irrelevant, so they could be allocated at a time (in one single \emph{round} or \emph{en bloc}). So, to obtain a cyclic-coalition stable allocation, it suffices to define a permutation giving the order in which the \emph{plants} are filled up (in the sense that the customers in any block are all allocated to the same plant), followed by the actual allocation of customers. An example of this ``permutation plus allocation'' is $\hat{P}=(1,2,3)$, where the order is established only among plants (plant 1 fills up first, then plant 2, and finally plant 3), together with the allocations $(\{1,6,7\},\{2,4,5\},\{3,8\})$. Note that $\hat{P}$ only results in a cyclic-coalition stable allocation provided that the first block of customers is allocated to their favorite plant out of the three available (which is $1$), and the next block to their favorite plant in the set $\{2,3\}$ ($2$ in this case).

    To further develop this idea, let us consider three different permutations of plants and the cyclic-coalition stable allocations that can be obtained. For $P_1=(1,2,3)$, there are $4$ customers $\{1,6,7,8\}$ whose favorite plant is $1$. If we allocate $\{1,6,7\}$ to plant $1$, then the remaining customers are $\{2,4,5\}$ (who prefer plant $2$ over plant $3$) and $\{3,8\}$ (who prefer plant $3$ over plant $2$), so allocating $\{1,6,7\}$ to plant $1$ results in one cyclic-coalition stable allocation (the first one in Table \ref{tab:toyexample2solution}, $\hat{P}$). If we still follow $P_1$ but allocate $\{1,6,8\}$ to plant $1$, then there are $4$ customers $\{2,4,5,7\}$ whose favorite plant is $2$, so we obtain the cyclic-coalition stable allocations $2-5$ in Table \ref{tab:toyexample2solution}. For a different plant permutation $P_2=(2,1,3)$, there are exactly three customers $\{2,4,5\}$ whose favorite plant is $2$. Out of the remaining five, $\{1,6,7,8\}$ prefer plant $1$, so in this case we obtain allocations $1$, $2$, $6$ and $10$ in Table \ref{tab:toyexample2solution} (note that permutations $P_1$ and $P_2$ might lead to the same allocation). Lastly, there is no cyclic-coalition stable allocation following the order $P_3=(3,1,2)$. This occurs because there is no set of $3$ customers whose favorite plant is $3$. 
    

To finish, let us study the allocation where customers $\{1,5,6\}$ fill up plant $1$, $\{2,4,7\}$ are allocated to plant $2$ and $\{3,8\}$ to plant $3$. This allocation is customer stable because none of the customers of plants $1$ or $2$ would rather be allocated to plant $3$ (which is the only plant with availability). However, it is not cyclic-coalition stable, since customers $5$ and $7$ would rather swap allocations. There is no permutation of customers giving rise to this allocation: if $5$ chooses before $7$, then when $5$ is allocated, neither plant $1$ nor $2$ are full, so $5$ is allocated to plant $1$ (and vice versa). Likewise, there is no permutation of plants giving rise to this allocation.

\end{example}

\begin{table}
    \centering
    \begin{tabular}{c|ccc}
    \toprule    
     & $j$=1 & $j$=2 & $j$=3 \\ \cmidrule(lr){2-4}
    \multirow{10}{.8cm}{$i$} & 167 &  245 & 38  \\ 
      & 168 &  245 & 37 \\ 
      & 168 &  247 & 35  \\ 
      & 168 &  257 & 34 \\ 
      & 168 &  457 & 23 \\ 
      & 178 &  245 & 36  \\ 
      & 678 &  124 & 35 \\ 
      & 678 &  125 & 34  \\ 
      & 678 &  145 & 23 \\ 
      & 678 &  245 & 13 \\ 
    \bottomrule
\end{tabular}
\caption{Cyclic-coalition stable allocations for the instance depicted in Table \ref{tab:toyexample2}.} 
\label{tab:toyexample2solution}
\end{table}


For the reduced formulation, we keep the customary sets of binary variables $y_j$, $j\in \J$ and $x_{ij}$, $i\in \I$, $j\in \J$ that determine the location and allocation. We also define a set of binary variables $u_j^r$, $j,r,\in \J$, equal to one if and only if plant $j$ is the $r$-th plant to fill up. Using these variables, the formulation is:
\begin{subequations} \label{CFLCProunds}
\begin{align} 
\texttt{(CFLCP)}_{\texttt{CC}}^{\texttt{R}} \quad \min_{\boldsymbol{y}, \boldsymbol{x}, \boldsymbol{u}} \quad & \sum_{j\in \J} f_jy_j + \sum_{i\in \I} \sum_{j\in \J} g_{ij}x_{ij}  \label{CFLCProunds_OF1} \\
\text{s.t.} \quad & \sum_{j\in \J} x_{ij} = 1, \quad \forall i\in \I, \label{CFLCProunds_allocation} \\
& x_{ij}  \le y_j, \quad \forall i\in \I,j\in\J, \label{CFLCProunds_assignmenttoopenplants}\\
& \sum_{r\in \J} u_j^r \le y_j, \quad \forall j\in\J, \label{CFLCProunds_seagotasiseabre} \\
& \sum_{j\in \J} u_j^1 \le 1, \label{CFLCProunds_seagotaunacadavez1} \\
& \sum_{j\in \J} u_j^r \le \sum_{j\in \J} u^{r-1}_j, \quad \forall r\in\J, r >1  \label{CFLCProunds_seagotaunacadavezresto} \\
&  \sum_{i\in \I} x_{ij} \le (c_j-1)y_j + \sum_{r\in \J} u_j^r, \quad \forall j\in\J  \label{CFLCProunds_uiguala0} \\
& c_j\sum_{r\in \J} u_j^r \le \sum_{i\in \I} x_{ij}, \quad \forall j\in\J  \label{CFLCProunds_uiguala1} \\
&  x_{ij} + \sum_{r'=1}^{r-1} u_j^{r'} + \sum_{j'\in \J:\atop j' \prec_i j} u_{j'}^r \le 1 + y_j, \quad \forall i\in \I,j,r\in\J, \label{CFLCProunds_preferencesFull} \\
&  y_j - \sum_{r\in\J} u_j^r \le \sum_{j' \preceq_i j} x_{ij'}, \quad \forall i\in \I,j\in\J, \label{CFLCProunds_preferencesEmpty} \\
& x_{ij} \in \{0,1\}, \quad \forall i \in \I, j \in \J,  \label{CFLCProunds_xbinarias} \\
& y_j, u_j^r \in \{0,1\}, \quad \forall j,r \in \J.  \label{CFLCProunds_binarias}
\end{align}
\end{subequations}

Constraints \eqref{CFLCProunds_allocation} and \eqref{CFLCProunds_assignmenttoopenplants} guarantee that each customer is assigned to an open facility. Constraints \eqref{CFLCProunds_seagotasiseabre} impose that a facility can only be filled if it is open, while \eqref{CFLCProunds_seagotaunacadavez1} and \eqref{CFLCProunds_seagotaunacadavezresto} enforce that facilities are filled sequentially, with only one being filled per round. Constraints \eqref{CFLCProunds_uiguala0} and \eqref{CFLCProunds_uiguala1} establish the capacity limitations. Moreover, \eqref{CFLCProunds_preferencesFull} prevent a facility assigned to a customer from being filled before another that the customer prefers, guaranteeing that there are no blocking pairs or blocking sets of customers. And \eqref{CFLCProunds_preferencesEmpty} require that if a facility is open and not full, then each customer must be served by a facility that is at least as preferred as that one, preventing blocking customers. Finally, constraints \eqref{CFLCProunds_xbinarias} and \eqref{CFLCProunds_binarias} specify that all decision variables are binary.

In this case, the reduction in the number of variables and constraints with respect to ${\normalfont \texttt{(CFLCP)}_{\texttt{CC}}}$ is quite significant. Constraints \eqref{CFLCPBrounds_allocation2} are no longer required; the families of constraints defined $\forall r$ now have fewer constraints because $r\in \J$ instead of $\I$; and the capacity constraints \eqref{CFLCPBrounds_uiguala0} and \eqref{CFLCPBrounds_uiguala1} correspond now to the smaller families \eqref{CFLCProunds_uiguala0} and \eqref{CFLCProunds_uiguala1}. The number of $u$-variables is reduced to only $|\J|^2$, and the number of $x$-variables is reduced from $|\I|^2 \cdot |\J|$ to $|\I| \cdot |\J|$. Furthermore, in the following proposition it is stated that the integrality constraints on the latter set of variables can be relaxed.

\begin{prop}
    The integrality constraints on the $x$-variables \eqref{CFLCProunds_xbinarias} can be relaxed in model ${\normalfont \texttt{(CFLCP)}_{\texttt{CC}}^\texttt{R}}$.
\end{prop}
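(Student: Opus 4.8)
The plan is to mirror the proof of Proposition \ref{pro: 3}: I would fix arbitrary integer values of $y$ and $u$ (these remain binary when only the $x$-integrality is relaxed) and show that the polyhedron in the $x$-variables carved out by the remaining constraints of ${\normalfont\texttt{(CFLCP)}_{\texttt{CC}}^{\texttt{R}}}$ is either empty or integral. Since there are only finitely many binary choices of $(y,u)$, this delivers an optimal solution of the relaxed model with integral $x$, which is what the statement asks for.

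So fix integer $y$ and $u$. First I would discard the constraints that, once $y$ and $u$ are frozen, become bounds on single variables: each instance of \eqref{CFLCProunds_assignmenttoopenplants} and of \eqref{CFLCProunds_preferencesFull} turns into an inequality $x_{ij}\le b_{ij}$ with $b_{ij}\in\Z$, which is redundant when $b_{ij}\ge 1$ and forces $x_{ij}=0$ when $b_{ij}\le 0$ (if $b_{ij}<0$ for some $i,j$ the $x$-system is empty and there is nothing to prove). Then I would partition $\J$, as in Proposition \ref{pro: 3}, into the unopened plants $\J_1=\{j:y_j=0\}$, the undersubscribed plants $\J_2=\{j:y_j=1,\ \sum_{r\in\J}u_j^r=0\}$ and the full plants $\J_3=\{j:y_j=1,\ \sum_{r\in\J}u_j^r\ge 1\}$. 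For $j\in\J_1$ every variable $x_{ij}$ is forced to zero; for $j\in\J_2$ constraint \eqref{CFLCProunds_uiguala1} is redundant and \eqref{CFLCProunds_uiguala0} reduces to $\sum_{i\in\I}x_{ij}\le c_j-1$; and for $j\in\J_3$ the pair \eqref{CFLCProunds_uiguala0}--\eqref{CFLCProunds_uiguala1} collapses into a single equality on $\sum_{i\in\I}x_{ij}$ with integer right-hand side. Finally, \eqref{CFLCProunds_preferencesEmpty} is redundant for $j\in\J_1\cup\J_3$ (its right-hand side $y_j-\sum_{r}u_j^r$ is non-positive), while for $j\in\J_2$ it reads $\sum_{j'\preceq_i j}x_{ij'}\ge 1$; assuming $\J_2\neq\emptyset$ and letting $j_i$ be the $\J_2$-plant most preferred by customer $i$, the instance of this family for $(i,j_i)$ dominates the rest and, together with \eqref{CFLCProunds_allocation}, forces $x_{ij'}=0$ whenever $j_i\prec_i j'$, so after restricting the support of \eqref{CFLCProunds_allocation} to $\{j'\in S_i:\,j'\preceq_i j_i\}$ (where $S_i$ denotes the plants not yet forced to zero) the whole family \eqref{CFLCProunds_preferencesEmpty} can be dropped.

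What remains is a system consisting of one allocation equality per customer, $\sum_{j\in S_i}x_{ij}=1$, one capacity constraint per plant on $\sum_{i\in\I}x_{ij}$ (an inequality for $j\in\J_2$, an equality for $j\in\J_3$, in both cases with integer right-hand side), and non-negativity. Every variable $x_{ij}$ then has coefficient $+1$ in the single customer row $i$ and $+1$ in the single plant row $j$, so, splitting the rows into customer rows and plant rows, the constraint matrix is totally unimodular \cite[Corollary 2.8]{wolsey1999integer}; the right-hand side being integral, the polyhedron is integral \cite[Proposition 2.3]{wolsey1999integer}, and therefore \eqref{CFLCProunds_xbinarias} can be relaxed.

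The step I expect to be the main obstacle is the collapse of \eqref{CFLCProunds_uiguala0}--\eqref{CFLCProunds_uiguala1} for $j\in\J_3$: the model contains no constraint forbidding $\sum_{r\in\J}u_j^r\ge 2$, so I must verify that those two inequalities are mutually compatible only if $\sum_{r}u_j^r=1$ (giving $\sum_{i}x_{ij}=c_j$) or if $c_j=1$ (giving $\sum_{i}x_{ij}=\sum_{r}u_j^r$), and that in both cases, as well as trivially in the incompatible case, no plant ends up contributing two distinct rows meeting the $x$-variables. This is precisely what keeps the reduced constraint matrix in the bipartite, two-nonzeros-per-column shape required by the unimodularity argument.
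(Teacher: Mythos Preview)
Your proposal is correct and follows essentially the same route as the paper: fix integral $(y,u)$, partition $\J$ into unopened/undersubscribed/full plants, observe that \eqref{CFLCProunds_preferencesFull} becomes a single-variable upper bound on each $x_{ij}$, and then reduce to the same bipartite totally unimodular system as in Proposition~\ref{pro: 3}. The paper's write-up is terser (it simply says the structure ``becomes equivalent to that of Proposition~\ref{pro: 3}, except for the presence of the additional constraints \eqref{CFLCProunds_preferencesFull}'' and notes that appending single-nonzero rows preserves total unimodularity), but the content is the same.

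One remark: the ``main obstacle'' you flag in the last paragraph is not actually an obstacle. Constraint \eqref{CFLCProunds_seagotasiseabre}, namely $\sum_{r\in\J}u_j^r\le y_j$, already forces $\sum_{r}u_j^r\in\{0,1\}$ for every $j$ once $y$ is binary, so the case $\sum_{r}u_j^r\ge 2$ never arises and the collapse of \eqref{CFLCProunds_uiguala0}--\eqref{CFLCProunds_uiguala1} into $\sum_{i}x_{ij}=c_j$ for $j\in\J_3$ is immediate.
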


\begin{proof}
For given integer values of the variables $y$ and $u$, we consider the same partition of the set $\J$ into $\J_1$, $\J_2$ and $\J_3$ as in Proposition~\ref{pro: 3}, where a plant is full if $\sum_{r \in \J} u_j^r = 1$.

With this partition, the structure of the model ${\normalfont \texttt{(CFLCP)}_{\texttt{CC}}^\texttt{R}}$ becomes equivalent to that of Proposition~\ref{pro: 3}, except for the presence of the additional constraints \eqref{CFLCProunds_preferencesFull}. These constraints consist of inequalities with a single nonzero coefficient equal to 1, and the right-hand side vector remains integral.

Adding such constraints to a totally unimodular matrix preserves total unimodularity. Hence, the full constraint matrix remains totally unimodular, and by \cite[Proposition~2.3]{wolsey1999integer}, the integrality constraints \eqref{CFLCProunds_xbinarias} can be relaxed.
\end{proof}

\subsection{Formulation for a maximum Pareto optimal matching for the CHA}
As an additional result, we note that formulation ${\normalfont \texttt{(CFLCP)}_{\texttt{CC}}^\texttt{R}}$ can be slightly modified to provide a Pareto optimal matching of maximum cardinality in an instance of CHA. In this setting, all plants are assumed to be open (we are only interested in the allocation problem) and the costs $f_j$ can be omitted. In addition, we maximize the number of clients allocated, so the objective weights $g_{ij}:=1$ and constraints \eqref{CFLCProunds_allocation} are relaxed. All in all, the model reads:
\begin{subequations} \label{CHArounds}
\begin{align} 
\texttt{(CHA)}_{\texttt{PO}}^{\texttt{R}} \quad \max_{\boldsymbol{x}, \boldsymbol{u}} \quad & \sum_{i\in \I} \sum_{j\in \J} x_{ij}  \label{CHArounds_OF1} \\
\text{s.t.} \quad & \sum_{j\in \J} x_{ij} \le 1, \quad \forall i\in \I, \label{CHArounds_allocation} \\
& \sum_{r\in \J} u_j^r \le 1, \quad \forall j\in\J, \label{CHArounds_seagotasiseabre} \\
& \sum_{j\in \J} u_j^1 \le 1, \label{CHArounds_seagotaunacadavez1} \\
& \sum_{j\in \J} u_j^r \le \sum_{j\in \J} u^{r-1}_j, \quad \forall r\in\J, r >1  \label{CHArounds_seagotaunacadavezresto} \\
&  \sum_{i\in \I} x_{ij} \le (c_j-1) + \sum_{r\in \J} u_j^r, \quad \forall j\in\J  \label{CHArounds_uiguala0} \\
& c_j\sum_{r\in \J} u_j^r \le \sum_{i\in \I} x_{ij}, \quad \forall j\in\J  \label{CHArounds_uiguala1} \\
&  x_{ij} + \sum_{r'=1}^{r-1} u_j^{r'} + \sum_{j'\in \J:\atop j' \prec_i j} u_{j'}^r \le 2, \quad \forall i\in \I,j,r\in\J, \label{CHArounds_preferencesFull} \\
&  1 - \sum_{r\in\J} u_j^r \le \sum_{j' \preceq_i j} x_{ij'}, \quad \forall i\in \I,j\in\J, \label{CHArounds_preferencesEmpty} \\
& x_{ij} \ge 0, \quad \forall i \in \I, j \in \J,  \label{CHArounds_xbinarias} \\
& u_j^r \in \{0,1\}, \quad \forall j,r \in \J.  \label{CHArounds_binarias}
\end{align}
\end{subequations}



\section{Computational experiments} \label{sec:compExp}   
In this section, we present the computational experiments carried out to evaluate and compare the proposed models. The objective is twofold: first, to compare the original formulations associated with each type of stability with their reduced counterparts; and second, to assess the performance quality of solutions of our models against those proposed in \cite{calvete2020}. All tests were performed using the commercial IP solver Xpress Mosel on a server equipped with an AMD Ryzen 9 7950X processor (3.4 GHz, 16 cores and 32 threads) and 4~\(\times\)~48~GB of DDR5 RAM (196~GB in total). The system allows up to 16 single-threaded tasks to run in parallel, each with 16~GB of allocated memory. For storage, it uses 2~\(\times\)~2~TB Western Digital Black SN770 SSDs configured in RAID~1 to ensure data reliability during execution.

\subsection{Comparison of each stable model with its reduced counterpart} 
We begin with a comparison between the original models and their reduced counterparts for the different notions of stability. To this end, several instances were randomly generated, with the number of clients, number of products, and capacity values fixed in advance. For each combination of these parameters, five different instances were considered, and the values shown in the tables correspond to the average of the results obtained across these five instances.

The instances were generated as follows: for the case of 50 clients, the number of facilities ranged between 5 and 10, and the capacity assigned to each facility was set between 12 and 20. For the case of 100 clients, the number of facilities varied among 5, 10, 15, and 20, with capacities ranging from 24 to 40. The cost of opening each facility was chosen as a random integer between 100 and 130, while the cost of assigning each client to each facility was selected as a random integer between 10 and 30. As for client preferences, a complete ranking over all facilities was generated randomly for each client.

For each type of stability, we present a table comparing the results obtained with the original model and its reduced version. Thus, the results for customer, pairwise, and cyclic-coalition stable models are depicted, respectively, in Tables \ref{tab:customer-results}-\ref{tab:cycliccoalition-results}. Each table includes the number of clients, the number of facilities, and the identical capacity assigned to each facility. These are followed by performance indicators obtained during the execution of the solver: the number of nodes explored in the branch-and-bound process; the linear relaxation bound computed at the root node; the total running time in seconds; and the final optimality gap. The time limit was set to one hour for all runs. In cases where no feasible integer solution was found within the time limit, the corresponding entry in the Gap column is marked with ``--”. The optimality gap is computed as \(100 \cdot \frac{UB - LB}{LB} \%\), where \(UB\) is the best integer solution value found within the time limit and \(LB\) is the final lower bound reported by the solver. When only a subset of the five instances is solved to optimality, the number of solved instances is indicated in parentheses next to the running time. This detail is omitted when all five instances are solved (which is reflected by a zero optimality gap) or when none are solved (which is evident from the maximum time of 3600 seconds).

\begin{table}
\centering
\begin{adjustbox}{max width=\textwidth}
\begin{tabular}{rrr  rrrr rrrr}
\toprule
Customers & Plants & Capacity & ${\normalfont\texttt{(CFLCP)}_{\texttt{CS}}}$ & & & & $\texttt{(CFLCP)}_{\texttt{CS}}^\texttt{R}$ & & & \\
\cmidrule(lr){4-7} \cmidrule(lr){8-11}
& & & Nodes & Time & LR Bound & Gap & Nodes & Time & LR Bound & Gap \\
\midrule
50&5&12&14641&7,5&1158,5&0&1&0,2&1158,5&0\\
50&5&20&3980,6&5,2&1030,6&0&88,6&0,4&1030,6&0\\[2mm]
50&10&12&74266&151&1112,8&0&2337,8&2,6&1112,8&0\\
50&10&20&13959,8&43,6&997,2&0&620,2&1,9&997,2&0\\[2mm]
100&5&24&536979&484,8&1828&0&5,8&0,5&1828&0\\
100&5&40&86035,8&91,3&1754,7&0&108,2&0,6&1754,7&0\\[2mm]
100&10&24&2474721&2850,4(2)&1752,3&3,68&4495,8&5,4&1752,3&0\\
100&10&40&232802,8&307,5&1711,1&0&1075&3,8&1711,1&0\\[2mm]
100&15&24&1475028,4&3600&1712&11,28&35526,2&39,3&1712&0\\
100&15&40&393874,6&1361,4&1672,9&0&3197&11&1672,9&0\\[2mm]
100&20&24&629432&3600&1705,6&15,71&168120,6&195,5&1705,6&0\\
100&20&40&495892&3176,5(1)&1661,1&5,46&7481&26,9&1661,1&0\\
\bottomrule
\end{tabular}
\end{adjustbox}
\caption{Results for the customer stable model \(\texttt{(CFLCP)}_{\texttt{CS}}\) and its reduced counterpart \(\texttt{(CFLCP)}_{\texttt{CS}}^\texttt{R}\). The column Customers indicates the number of clients, Plants the number of facilities, and Capacity the identical capacity assigned to each facility. The columns Nodes, Time, LR Bound, and Gap refer respectively to the number of nodes explored in the branch-and-bound tree, the total running time (in seconds), the linear relaxation bound at the root node, and the optimality gap.}
\label{tab:customer-results}
\end{table}

Regarding the comparison of models ${\normalfont\texttt{(CFLCP)}_{\texttt{CS}}}$ and $\texttt{(CFLCP)}_{\texttt{CS}}^\texttt{R}$, in Table \ref{tab:customer-results} we observe that the reduced model successfully solves all instances, while the original model fails in some cases. Whenever the total running time reaches 3600 seconds, none of the five instances were solved to optimality. If the time limit is not reached but a nonzero gap remains, it indicates that at least one instance was not solved within the time limit. On average, the reduced model explores significantly fewer nodes and consistently solves the instances in under 4 minutes, unlike the original model, which struggles to do so in several configurations. These empirical results provide solid support for discarding the original formulation in favor of the reduced model, whose superior performance is evident across all tested instances.

Interestingly, both models yield the same linear relaxation bound at the root node. This can be explained by the fact that any solution to the relaxation of the reduced model can be transformed into a feasible solution for the relaxation of the original model by suitably increasing the values of the auxiliary variables \(u_j\) up to their upper bounds—that is, so that constraint \eqref{CFLCP_CSR_capacity0} is satisfied with equality. Since these variables do not appear in the objective function, this adjustment has no effect on the objective value. As a result, all constraints in the original model are satisfied, and the same bound is preserved.

In general, when two of the three parameters (number of clients, number of facilities, and facility capacity) are fixed, the model tends to become more complex as the number of clients or facilities increases, due to the larger number of variables and constraints involved. Conversely, increasing the facility capacity tends to simplify the problem, as higher capacities reduce the combinatorial effort required to avoid exceeding them.

\begin{table}
\centering
\begin{adjustbox}{max width=\textwidth}
\begin{tabular}{rrr rrrr rrrr}
\toprule
Customers & Plants & Capacity & ${\normalfont\texttt{(CFLCP)}_{\texttt{PW}}}$ & & & & $\texttt{(CFLCP)}_{\texttt{PW}}^\texttt{R}$ & & & \\
\cmidrule(lr){4-7} \cmidrule(lr){8-11}
& & & Nodes & Time & LR Bound & Gap & Nodes & Time & LR Bound & Gap \\
\midrule
50&5&12&5158,6&7,8&1197,6&0&51,8&0,3&1212,2&0 \\
50&5&20&158,2&3&1033,5&0&104,6&0,3&1036&0 \\[2mm]
50&10&12&68717,8&346,5&1118,1&0&31979,8&16,3&1124,1&0 \\
50&10&20&1508,6&14,2&997,2&0&1344,2&3,1&997,3&0 \\[2mm]
100&5&24&67063,2&262,8&1889,9&0&169,4&0,6&1917,4&0 \\
100&5&40&3309&21,9&1754,1&0&196,6&0,6&1754,7&0 \\[2mm]
100&10&24&127253,6&3600&1752,8&21,1&246916,8&419,7&1756,1&0 \\
100&10&40&18346,6&184,4&1701,9&0&2735&12&1711,1&0 \\[2mm]
100&15&24&23150,2&3600&1712,1&37,84&409771,6&3600&1713,3&16,3 \\
100&15&40&41507,6&3454(1)&1665,6&16,42&9162,6&83,3&1672,9&0 \\[2mm]
100&20&24&6583,4&3600&1705,7&--&139989,4&3600&1705,9&23,3 \\
100&20&40&6325,4&3600&1654,7&29,34&14292,2&555,8&1661,1&0 \\
\bottomrule
\end{tabular}
\end{adjustbox}
\caption{Results for the pairwise stable model \(\texttt{(CFLCP)}_{\texttt{PW}}\) and its reduced counterpart \(\texttt{(CFLCP)}_{\texttt{PW}}^\texttt{R}\). The column Customers indicates the number of clients, Plants the number of facilities, and Capacity the identical capacity assigned to each facility. The columns Nodes, Time, LR Bound, and Gap refer respectively to the number of nodes explored in the branch-and-bound tree, the total running time (in seconds), the linear relaxation bound at the root node, and the optimality gap.}
\label{tab:pairwise-results}
\end{table}

The performance of models ${\normalfont\texttt{(CFLCP)}_{\texttt{PW}}}$ and $\texttt{(CFLCP)}_{\texttt{PW}}^\texttt{R}$ is depicted in Table \ref{tab:pairwise-results}. We observe that in this case neither model is able to solve all instances. In particular, neither of the two formulations manages to solve any of the instances with 100 clients, 15 or 20 facilities, and facility capacity equal to 24 within the time limit. However, the reduced model succeeds in solving the remaining instances, whereas the original formulation fails to solve several additional configurations.

A behavior similar to the one observed in the customer stable case arises here as well: the reduced model consistently explores fewer nodes and achieves shorter running times. Moreover, unlike in the previous case, the linear relaxation bounds at the root node differ between the two formulations, with the reduced model providing tighter bounds. This additional advantage further supports the empirical preference for the reduced model, which outperforms the original formulation in terms of both time and solvability. These results justify the dismissal of the original model in favor of its reduced counterpart. 

Regarding parameter influence, the same trend holds: the instances that combine a higher number of clients and facilities with lower facility capacity are the most computationally demanding. In fact, the hardest instances—those unsolved by both models—correspond to the most extreme parameter combinations within our set of instances, or to configurations that are nearly as extreme.

\begin{table}
\centering
\begin{adjustbox}{max width=\textwidth}
\begin{tabular}{rrr  rrrr rrrr}
\toprule
Customers & Plants & Capacity & ${\normalfont\texttt{(CFLCP)}_{\texttt{CC}}}$ & & & & $\texttt{(CFLCP)}_{\texttt{CC}}^\texttt{R}$ & & & \\
\cmidrule(lr){4-7} \cmidrule(lr){8-11}
& & & Nodes & Time & LR Bound & Gap & Nodes & Time & LR Bound & Gap \\
\midrule
50&5&12&10926,4&3600&1162,8&16,2&35,8&0,9&1161,1&0 \\
50&5&20&22458&3600&1031,8&12,5&98,6&1,1&1030,6&0 \\[2mm]
50&10&12&23,2&3600&1114,7&27,8&19539,8&58,1&1112,8&0 \\
50&10&20&304,8&3600&997,8&17,2&985,4&9,3&997,2&0 \\[2mm]
100&5&24&3&3600&1832,3&--&145&2,1&1833,3&0 \\
100&5&40&364,8&3600&1756,7&24,5&141,4&1,4&1754,7&0 \\[2mm]
100&10&24&0&3600&1754,6&--&24029,4&166,9&1752,3&0 \\
100&10&40&0&3600&1711,7&--&1199,8&24,3&1711,1&0 \\[2mm]
100&15&24&0&3600&1713,6&--&157516,4&2290,5(4)&1712&1,2 \\
100&15&40&0&3600&1673,2&--&3325,6&167,7&1672,9&0 \\[2mm]
100&20&24&0&3600&1706,9&--&230371,6&3484,5(1)&1705,6&10,9 \\
100&20&40&0&3600&1661,4&--&5187&617,1&1661,1&0 \\
\bottomrule
\end{tabular}
\end{adjustbox}
\caption{Results for the cyclic-coalition stable model \(\texttt{(CFLCP)}_{\texttt{CC}}\) and its reduced counterpart \(\texttt{(CFLCP)}_{\texttt{CC}}^\texttt{R}\). The column Customers indicates the number of clients, Plants the number of facilities, and Capacity the identical capacity assigned to each facility. The columns Nodes, Time, LR Bound, and Gap refer respectively to the number of nodes explored in the branch-and-bound tree, the total running time (in seconds), the linear relaxation bound at the root node, and the optimality gap.}
\label{tab:cycliccoalition-results}
\end{table}

A detailed comparison between models ${\normalfont\texttt{(CFLCP)}_{\texttt{CC}}}$ and $\texttt{(CFLCP)}_{\texttt{CC}}^\texttt{R}$ is presented in Table~\ref{tab:cycliccoalition-results}. In this case, the initial model is unable to solve any of the generated instances: it only finds feasible solutions for some of them, but with a very high gap. In contrast, the reduced model manages to solve almost all the provided instances, except for two specific cases in which it fails to solve some of them. Moreover, the reduced model consistently exhibits a much lower number of explored nodes and significantly shorter running times compared to the original formulation. This reinforces the pattern already observed in the other stability variants, further highlighting the computational advantages of the reduced approach in terms of both efficiency and reliability. In this case, the improvement is more significant than in previous cases because there is a noticeable reduction in the number of variables: model ${\normalfont\texttt{(CFLCP)}_{\texttt{PW}}}$ makes use of binary variables $x^r_{ij}$ (a total of $|\I|^2\cdot|\J|$) to account for customers' decision choice, while model $\texttt{(CFLCP)}_{\texttt{PW}}^\texttt{R}$ uses the much smaller set of continuous variables $x_{ij}$ (only $|\I|\cdot|\J|$).

A similar effect is observed regarding the influence of each parameter. When two of the three parameters (number of clients, number of facilities, and facility capacity) are fixed, increasing the number of clients or facilities tends to make the problem more complex, as it enlarges the solution space and the number of constraints to manage. In contrast, increasing the facility capacity generally eases the computational burden, since higher capacities reduce the number of combinations that must be evaluated to ensure feasibility.

Finally, note that more realistic configurations where the list of preferences are incomplete would be easier to tackle for the models. In fact, these instances are naturally sparse and, since every customer needs to be assigned to a plant, they force some sets of plants to open and reduce the combinatorics. These advantages allow to solve instances with a larger number of customers and plants within the same time limit. However, we decided to test only instances with a complete list of preferences because our interest focuses on the comparison between the two formulations presented for each setting.



\subsection{Comparison of the solution quality provided by each setting}

In this section, we compare the quality of the solutions obtained under our different settings with the optimal values reported in the computational experiment presented in \cite{calvete2020}, where model \eqref{BMCalvete} was used. The procedure for generating the instances is detailed in that work. While the capacity of each plant, as well as the cost and preference values, are randomly generated, the structural parameters of the instances---namely, the number of clients and the number of plants---are fixed and known.

It is important to note that we do not aim to evaluate or compare the performance of the model proposed in \cite{calvete2020}. Instead, we use the same set of benchmark instances to assess how the optimal objective values obtained by their model compare to the results produced by our alternative solution settings.

\begin{table}
\centering
\begin{adjustbox}{max width=\textwidth}
\begin{tabular}{c r rrr rrr rrr}
\toprule
Instance & $\texttt{(CGICC)}$ & $\texttt{(CFLCP)}_{\texttt{CS}}^\texttt{R}$ & & & $\texttt{(CFLCP)}_{\texttt{PW}}^\texttt{R}$ & & & $\texttt{(CFLCP)}_{\texttt{CC}}^\texttt{R}$& & \\
\cmidrule(lr){2-2} \cmidrule(lr){3-5} \cmidrule(lr){6-8}  \cmidrule(lr){9-11}
& Obj & Obj & Time & Gap & Obj & Time & Gap & Obj & Time & Gap \\
\midrule
$P_1$&\textbf{18592}&\textbf{10016}&0,6&0&\textbf{15017}&4,8&0&\textbf{15335}&424,4&0 \\
$P_2$&\textbf{17658}&\textbf{9327}&0,4&0&\textbf{14404}&5,5&0&\textbf{14598}&86,6&0 \\
$P_3$&\textbf{19058}&\textbf{10527}&0,3&0&\textbf{15604}&6,6&0&\textbf{15798}&143,8&0 \\
$P_4$&\textbf{20442}&\textbf{11727}&0,4&0&\textbf{16736}&5,5&0&\textbf{16998}&1508,8&0 \\[2mm]
$P_5$&\textbf{18552}&\textbf{13832}&1,2&0&\textbf{15347}&16,3&0&\textbf{16050}&275&0 \\
$P_6$&\textbf{17806}&\textbf{12924}&1&0&\textbf{14601}&16&0&\textbf{15304}&402,7&0 \\
$P_7$&\textbf{19206}&\textbf{14324}&0,7&0&\textbf{16001}&22,8&0&\textbf{16704}&142,8&0 \\
$P_8$&\textbf{20606}&\textbf{15724}&1,3&0&\textbf{17401}&28&0&\textbf{18104}&167,1&0 \\[2mm]
$P_9$&\textbf{17651}&\textbf{14072}&1,2&0&\textbf{15141}&8,7&0&\textbf{15717}&1246,4&0 \\
$P_{10}$&\textbf{17146}&\textbf{13616}&1,5&0&\textbf{14685}&11,5&0&\textbf{15261}&263,3&0 \\
$P_{11}$&\textbf{18146}&\textbf{14616}&1,3&0&\textbf{15685}&9,1&0&\textbf{16261}&971,2&0 \\
$P_{12}$&\textbf{19146}&\textbf{15616}&1,2&0&\textbf{16685}&11,1&0&\textbf{17261}&59,8&0  \\[2mm]
$P_{13}$&\textbf{17745}&\textbf{8696}&0,8&0&14094&7200&5,09&16665&7200&85,11 \\
$P_{14}$&\textbf{16720}&\textbf{7922}&0,9&0&\textbf{13210}&3218,7&0&16569&7200&104,8 \\
$P_{15}$&\textbf{18120}&\textbf{9322}&1&0&\textbf{14410}&6629,5&0&18101&7200&87,54 \\
$P_{16}$&\textbf{19427}&\textbf{10717}&1,4&0&\textbf{15610}&2635,5&0&17623&7200&64,16 \\[2mm]
$P_{17}$&\textbf{17613}&\textbf{13511}&186,1&0&15582&7200&16,37&16593&7200&56,18 \\
$P_{18}$&\textbf{16718}&\textbf{12554}&497,2&0&14553&7200&14&15239&7200&44,91 \\
$P_{19}$&\textbf{18118}&\textbf{13954}&367&0&15814&7200&14,56&17448&7200&54,73 \\
$P_{20}$&\textbf{19518}&\textbf{15354}&302,2&0&17094&7200&13,03&21954&7200&98,76 \\[2mm]
$P_{21}$&\textbf{17253}&\textbf{13442}&173,1&0&15117&7200&10,27&15604&7200&31,78 \\
$P_{22}$&\textbf{16407}&\textbf{12433}&123,7&0&14024&7200&7,59&14906&7200&34,12 \\
$P_{23}$&\textbf{17607}&\textbf{13633}&177,6&0&15224&7200&7,86&18148&7200&82,34 \\
$P_{24}$&\textbf{18807}&\textbf{14833}&153,2&0&16424&7200&0,99&19277&7200&74,8 \\

\bottomrule
\end{tabular}
\end{adjustbox}
\caption{Results for the reduced models \(\texttt{(CFLCP)}_{\texttt{CS}}^\texttt{R}\), \(\texttt{(CFLCP)}_{\texttt{PW}}^\texttt{R}\), and \(\texttt{(CFLCP)}_{\texttt{CC}}^\texttt{R}\), compared with the original model proposed by \cite{calvete2020}, referred to as $\texttt{(CGICC)}$. The column Instance indicates the instance identifier. Under $\texttt{(CGICC)}$, the column Obj corresponds to the objective value reported in their study. For the reduced models, the columns Obj, Time, and Gap respectively indicate the best objective value found, the total running time in seconds, and the final optimality gap.}
\label{tab:calvete-results}
\end{table}

For consistency, we restrict our comparison to the first 24 instances considered in their study. All of them involve 50 customers. The first 12 instances use 10 plants, while the remaining 12 use 20. The rest of the original dataset was not included due to time or memory limitations encountered when solving the pairwise and cyclic-coalition stable models. In our experiments, we set a time limit of 2 hours per instance. Nonetheless, for some of the larger cases, we were able to obtain high-quality incumbent solutions, which are comparable to or even better than those reported in \cite{calvete2020}.

The performance of the reduced models $\texttt{(CFLCP)}_{\texttt{CS}}^\texttt{R}$, $\texttt{(CFLCP)}_{\texttt{PW}}^\texttt{R}$, and $\texttt{(CFLCP)}_{\texttt{CC}}^\texttt{R}$ is summarized in Table~\ref{tab:calvete-results}, where we compare their results with those obtained using the original model proposed by \cite{calvete2020}, which we refer to as $\texttt{(CGICC)}$. For each instance, we report the objective value obtained by $\texttt{(CGICC)}$, and for the reduced models we include the objective value, the computational time (in seconds), and the final optimality gap.

We observe, first, that model $\texttt{(CFLCP)}_{\texttt{CS}}^\texttt{R}$ is able to solve all 24 instances within the imposed time limit of two hours. In contrast, model $\texttt{(CFLCP)}_{\texttt{PW}}^\texttt{R}$ solves 15 of the first 16 instances, while $\texttt{(CFLCP)}_{\texttt{CC}}^\texttt{R}$ successfully solves only the first 12. As expected—since it is a less restrictive formulation—$\texttt{(CFLCP)}_{\texttt{CS}}^\texttt{R}$ consistently finds optimal solutions with lower objective values than those obtained with the original model $\texttt{(CGICC)}$. Moreover, it achieves this in under two seconds for the first 16 instances and in less than five minutes for the remaining ones.

A similar pattern is observed for $\texttt{(CFLCP)}_{\texttt{PW}}^\texttt{R}$. Although not all instances are solved to optimality within the time limit, a feasible solution is found in every case, and all of them improve upon the objective values reported by $\texttt{(CGICC)}$. This model solves the first 12 instances in under 30 seconds, while the remaining ones either exceed the time limit or require significantly more effort.

In the case of model $\texttt{(CFLCP)}_{\texttt{CC}}^\texttt{R}$, we obtain optimal solutions for the first 12 instances, all of which improve upon the original model. Even in the instances where optimality is not reached, several feasible solutions still yield better objective values than those reported in \cite{calvete2020}. However, this model requires significantly more time to reach such solutions. These results reinforce the idea that the cyclic-coalition stable formulation is less restrictive than the original model, allowing for a broader set of feasible allocations. As a consequence, it can potentially capture more efficient configurations that still satisfy a meaningful notion of stability, although this comes at the cost of greater computational effort.

It is worth highlighting a clear pattern in the results: as the stability concept becomes more restrictive---from customer stability to pairwise stability, and finally to cyclic-coalition stability---the objective function value systematically increases. This behavior confirms that many blocking structures involving more than two agents do appear in practice, and they are effectively eliminated only under the strongest stability notion. Hence, each increase in model restrictiveness translates into higher total cost, as the solution space becomes progressively constrained.
\begin{table}
\centering
\begin{adjustbox}{max width=\textwidth}
\begin{tabular}{c r r r}
\toprule
Instance & $\texttt{(CGICC)}$ Obj & $\texttt{(CFLCP)}_{\texttt{CC}}^\texttt{R}$ Obj & Relative Diff. (\%) \\
\midrule
$P_1$&18592&\textbf{15335}&-17,52 \\
$P_2$&17658&\textbf{14598}&-17,33 \\
$P_3$&19058&\textbf{15798}&-17,11 \\
$P_4$&20442&\textbf{16998}&-16,85 \\[2mm]
$P_5$&18552&\textbf{16050}&-13,49 \\
$P_6$&17806&\textbf{15304}&-14,05 \\
$P_7$&19206&\textbf{16704}&-13,03 \\
$P_8$&20606&\textbf{18104}&-12,14\\[2mm]
$P_9$&17651&\textbf{15717}&-10,96 \\
$P_{10}$&17146&\textbf{15261}&-10,99 \\
$P_{11}$&18146&\textbf{16261}&-10,39 \\
$P_{12}$&19146&\textbf{17261}&-9,85 \\[2mm]
$P_{13}$&17745&\textbf{16665}&-6,09 \\
$P_{14}$&16720&\textbf{16569}&-0,90 \\
$P_{15}$&18120&\textbf{18101}&-0,10 \\
$P_{16}$&19427&\textbf{17623}&-9,29 \\[2mm]
$P_{17}$&17613&\textbf{16593}&-5,79 \\
$P_{18}$&16718&\textbf{15239}&-8,85 \\
$P_{19}$&18118&\textbf{17448}&-3,70 \\
$P_{20}$&\textbf{19518}&21954&12,48 \\[2mm]
$P_{21}$&17253&\textbf{15604}&-9,56 \\
$P_{22}$&16407&\textbf{14906}&-9,15 \\
$P_{23}$&\textbf{17607}&18148&3,07 \\
$P_{24}$&\textbf{18807}&19277&2,50 \\
\bottomrule
\end{tabular}
\end{adjustbox}
\caption{Comparison between the original model proposed by \cite{calvete2020} ($\texttt{(CGICC)}$) and our cyclic-coalition stable formulation ($\texttt{(CFLCP)}_{\texttt{CC}}^\texttt{R}$). For each instance, we report the objective values obtained with both models, as well as the relative percentage change. Negative values indicate improvements (lower total cost) achieved by our model, while positive values reflect cases where $\texttt{(CGICC)}$ performs better.}

\label{tab:cc-vs-calvete}
\end{table}
It is also worth noting that the original model $\texttt{(CGICC)}$ demonstrates a certain degree of robustness, as it provides optimal solutions for all tested instances under its own formulation. However, this model imposes a very restrictive notion of stability, which considerably narrows the solution space and may exclude other desirable allocations that remain stable under broader definitions. In contrast, the reduced models introduced in this work not only yield improved objective values, but also offer the flexibility to accommodate varying degrees of stability, allowing the decision-maker to balance efficiency and fairness according to the specific context.

Among these, model $\texttt{(CFLCP)}{\texttt{CC}}^\texttt{R}$ deserves particular attention due to its conceptual alignment with $\texttt{(CGICC)}$. Both aim to deliver allocations that are globally satisfactory for all clients; however, while $\texttt{(CGICC)}$ identifies a few feasible solutions that satisfy its stringent constraints, $\texttt{(CFLCP)}{\texttt{CC}}^\texttt{R}$ explores the full space of cyclic-coalition stable allocations. This broader scope avoids arbitrary prioritization among clients and enables the discovery of more cost-effective solutions.

To better illustrate this comparison, Table~\ref{tab:cc-vs-calvete} reports the objective values achieved by both models across the 24 instances, along with the relative percentage difference. The first 12 instances were solved to optimality under our cyclic-coalition stability model, while the remaining ones reached the 2-hour time limit. In most cases, $\texttt{(CFLCP)}_{\texttt{CC}}^\texttt{R}$ obtains strictly better objective values, underscoring its ability to identify more efficient and globally stable solutions.

Importantly, in the few instances where our model yields a worse objective value, this is necessarily due to the fact that optimality was not reached within the time limit. Since any feasible solution to the original model is also feasible for ours, no degradation in solution quality can occur unless the solver is interrupted before convergence. The relative improvement is especially notable in several instances, suggesting that the original model's restrictive framework may preclude access to high-quality allocations that remain stable under a broader, yet still meaningful, stability concept.

\section{Conclusions and future research} \label{sec:conclusion}
In this work, we have provided new definitions of stability and derived associated mixed-integer linear optimization models to solve the CFLCP in the global preference maximization framework. 

The cyclic-coalition stable setting poses several challenges that we plan to address in future research. In particular, when ties arise in the customers’ preference lists, the models proposed in this article are no longer directly applicable. In such cases, new formulations are required that incorporate additional structural properties and draw on results from the matching theory literature. Exploring these extensions would not only allow for a better understanding of stability under customer preferences with indifference, but could also lead to novel algorithms capable of handling more complex and realistic preference structures.

In the deterministic version of the problem solved in this article, it is assumed that all customers can be allocated \emph{simultaneously} in the most favorable way for the decision maker. This corresponds to an \emph{optimistic} solution. However, in many practical applications of this and related problems \citep[see, e.g.,][]{hassin2024}, customers arrive sequentially. This motivates considering extensions of the model that yield more robust solutions. A \emph{pessimistic approach}, for instance, determines the optimal plant locations under the assumption that customers are allocated in the most costly way, leading to a bilevel min–max formulation. Other intermediate approaches could also be explored, such as computing mean-cost or median-cost allocations when customer arrivals are assumed to be random.

\section*{Acknowledgements} 

All the authors acknowledge financial support through the grant RED2022-134149-T funded by MICIU/AEI/10.13039/501100011033
(Thematic Network on Location Science and Related Problems).

\bibliographystyle{elsarticle-harv} 
\bibliography{references}

\end{document}